\theoremstyle{plain} \textwidth=430pt \textheight=650pt
\newcommand{\Rmnum}[1]{\expandafter\@slowroman #1@}
\newtheorem{theorem}{Theorem}
\newtheorem{lemma}[theorem]{Lemma}
\newtheorem{assumption}[theorem]{Assumption}
\title{Conformal invariance of dimer heights on isoradial double graphs}
\author{Zhongyang Li}
\date{}
\begin{document}
\maketitle

\begin{abstract}

An isoradial graph is a planar graph in which each face is inscribable into a circle of common radius. We study the 2-dimensional perfect matchings on a bipartite isoradial graph, obtained from the union of an isoradial graph and its interior dual graph. Using the isoradial graph to approximate a simply-connected domain bounded by a simple closed curve, by letting the mesh size go to zero, we prove that in the scaling limit, the distribution of height is conformally invariant and converges to a Gaussian free field.

\end{abstract}

\section{Introduction}

One of the most important starting points for two-dimensional critical lattice models in statistical physics is the assumption that the continuous limit is universal and conformally invariant. It says that, in the limit when the lattice spacing $\epsilon$ goes to zero, macroscopic quantities of the model transform covariantly under conformal maps of the domain (conformal invariance) and are independent of the lattice (universality). Under the assumption of universality and conformal invariance, physicists have successfully predicted exact values of certain critical exponents. However,  the conformal invariance and universality assumptions were beyond the mathematical justification until very recently \cite{RK00, RK002,SS01,LSW04,SS09,CS12,RK11}.

In this paper, we focus on perfect matchings on planar graphs. A perfect matching, or dimer covering of a finite graph is a set of edges covering all the vertices exactly once. The dimer model is the study of random dimer coverings of a graph. The dimer model has been known to be integrable since the work \cite{pk,el} and hence amenable to a number of techniques, see \cite{rb,aljmp}. Following Kasteleyn's technique for dimers on planar graphs \cite{pk}, and by assigning weights to edges, one can define a probability measure on random dimer coverings \cite{RK97}. The height function is a random function  which assigns a unique number to each faces of the graph for each realization of random perfect matchings. If the underlying graph is a subgraph of the square grid $\mathbb{Z}^2$, Kenyon \cite{RK00,RK01} proved the conformal invariance of height distribution in the scaling limit under certain boundary conditions, given the uniform measure of random perfect matchings. The hexagonal lattice case were studied in \cite{RS04,RK08}.

An isoradial graph is a planar graph in which each face is inscribable in a circle of common radius. It was introduced by Duffin \cite{RD68} in the late sixties, in an equivalent form of rhombic lattices, and reappeared recently in the work \cite{CM01,RK02,CS11,CS12,BD11, JD11, GM}. Isoradial graphs are a  large class of graphs where the complex analysis techniques have a ``nice'' discrete analog, and hence a natural setting for the universality assumption, which includes, and is more general than the class of regular graphs, see \cite{CS11} for an exposition of the discrete complex analysis technique on isoradial graphs. Ising models on isoradial graphs have been studied extensively in \cite{CM01,BD11,CS12}, and spectacular results were proved including conformal invariance and universality \cite{CS12}. For perfect matchings on isoradial graphs, Kenyon \cite{RK02} proved an explicit form for the inverse of the weighted adjacency matrix of bipartite isoradial graphs on the whole plane. Following that, it is proved in \cite{BdT07}, that the height function of perfect matchings on the whole plane converges to a Gaussian free field, yet the conformal invariance and boundary conditions were not addressed in \cite{BdT07} since the paper deals with graphs on the whole plane.

In this paper, we use isoradial graphs to approximate an arbitrary simply connected domain in the plane bounded by a simple closed curve, and prove the following results

\begin{theorem}Let $\Omega$ be a simply connected bounded domain in the plane bounded by a simple closed curve. Assume $\partial \Omega$ has a straight portion $L_0$. Let $\mathcal{G}_{\delta}$ be an isoradial graph embedded into the whole plane with common radius $\delta$, and $\mathcal{G}_{\Omega,\delta}$ be the largest subgraph of $\mathcal{G}_{\delta}$ consisting of faces, and completely inside $\Omega$.  Let $\mathcal{G}'_{\Omega,\delta}$ be the interior dual graph of $\mathcal{G}_{\Omega,\delta}$, and assume $\mathcal{G}_{\Omega,\delta}$ is the interior dual graph of $\mathcal{G}''_{\Omega,\delta}$. Assume $\partial\mathcal{G}_{\Omega,\delta}''$ has a straight part $L_{0,\delta}$ approximating $L_0$.  Let $\mathcal{G}_{\Omega,\delta}^D$ be the superposition of $\mathcal{G}_{\Omega,\delta}$ and $\mathcal{G}'_{\Omega,\delta}$ with one boundary vertex of $\mathcal{G}_{\Omega,\delta}$, approximating a point on $L_0$ as $\delta\rightarrow 0$, removed. As the mesh size $\delta\rightarrow 0$, the distribution of the height function for random perfect matchings on $\mathcal{G}_{\Omega,\delta}^D$, in the scaling limit, is conformally invariant and converges to a Gaussian free field.
\end{theorem}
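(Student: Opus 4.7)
\medskip

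\noindent\textbf{Proof proposal.} The plan is to follow Kenyon's moment approach from \cite{RK00,RK01}, replacing the square-grid ingredients with their isoradial analogs from \cite{RK02,BdT07}. The height function $h$ on $\mathcal{G}_{\Omega,\delta}^{D}$ differs between two configurations by the dual flow along a symmetric difference of matchings, so for any pair of faces $f_1,f_2$ the increment $h(f_2)-h(f_1)$ can be written as a signed sum, over edges of a fixed path from $f_1$ to $f_2$, of indicators of edges being present in the matching. Taking expectations and higher moments, and using the determinantal structure of the dimer measure with respect to the Kasteleyn operator $K$, reduces everything to controlling minors of $K^{-1}(b,w)$ as $\delta\to 0$. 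The removal of one boundary vertex on $L_0$ is precisely what produces a Temperley-type setup on $\mathcal{G}^D_{\Omega,\delta}$; in particular it yields a well-defined height function with a Dirichlet boundary behaviour along the approximated boundary, which is the boundary condition of the limiting Gaussian free field.

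First I would record Kenyon's local formula for $K^{-1}$ on the full-plane isoradial graph from \cite{RK02}, together with its asymptotics, and verify that in the bulk of $\Omega$ the scaled entries $\delta^{-1} K^{-1}(b,w)$ converge to the appropriate derivative of the full-plane Green's function, exactly as used in \cite{BdT07}. Next I would build a boundary-corrected inverse $K^{-1}_{\Omega,\delta}$ by subtracting a discrete harmonic correction that kills the full-plane kernel on $\partial \mathcal{G}^D_{\Omega,\delta}$; this is the standard ``image charges'' construction. Using the straight portion $L_{0,\delta}$ and the assumed isoradial embedding, the rhombic structure near $L_0$ allows a reflection argument that handles the piece of boundary where the removed vertex lives, while on the rest of $\partial\Omega$ the correction is constructed by solving a discrete Dirichlet problem on $\mathcal{G}_{\Omega,\delta}$. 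Convergence of the discrete Dirichlet problem to its continuous counterpart on isoradial graphs is known (see the discrete complex analysis machinery of \cite{CS11}), and gives convergence of $K^{-1}_{\Omega,\delta}$ to a conformally covariant limit built from the Green's function $G_\Omega$ of $\Omega$.

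Given that input, the asymptotics of the two-point function $\mathbb{E}[(h(f_2)-h(f_1))^2]$ reduce, after a careful cancellation of the oscillating phases coming from the Kasteleyn signs and from the complex weights of rhombus edges, to $\tfrac{1}{\pi}\log|z_1-z_2|$ plus boundary correction, i.e.\ to $G_\Omega(z_1,z_2)$ up to the standard GFF normalization. For higher moments I would use Wick's theorem: the $2n$-point moment is a sum over pairings and reduces to a determinant of two-point functions, exactly matching the moments of the Gaussian free field. Conformal invariance is then automatic from the conformal invariance of $G_\Omega$. Finally, tightness of the height as a random distribution on $\Omega$ follows from uniform bounds on the second moment in any compact subdomain, together with a standard Sobolev-space argument, so moment convergence upgrades to convergence in distribution.

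The hard part will be the boundary analysis: showing that $K^{-1}_{\Omega,\delta}$ has the correct Dirichlet-type behaviour uniformly up to $\partial\Omega$, and quantifying how fast the discrete harmonic correction converges to its continuous counterpart near the possibly irregular part of $\partial\Omega$. The assumption that $\partial\Omega$ has a straight portion $L_0$ on which the removed vertex sits is what makes the Temperley construction and the local reflection argument work without accumulating errors, but extending this control uniformly over the whole boundary, and verifying that the logarithmic singularity on the diagonal is the only surviving singularity in the scaling limit, is where the main technical estimates live and where the discrete complex analysis tools of \cite{CS11} will do most of the work.
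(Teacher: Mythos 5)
Your overall architecture (moment method, convergence of the inverse Kasteleyn operator, Wick-type pairing for higher moments, conformal invariance inherited from the Green's function) matches the paper's. But there is a concrete gap in your treatment of the boundary-corrected inverse. You propose to construct $K^{-1}_{\Omega,\delta}$ by subtracting a discrete harmonic correction obtained by \emph{solving a discrete Dirichlet problem}, i.e.\ you treat the boundary condition as uniformly Dirichlet. The central structural fact in the paper (its Lemma \ref{dn}) is that the black vertices of $\mathcal{G}^D_{\Omega,\delta}$ come in two types, and $\overline{D}^*\overline{D}$ restricted to dual-graph black vertices is the Laplacian with \emph{Dirichlet} boundary conditions, while restricted to primal-graph black vertices it is the Laplacian with \emph{Neumann} boundary conditions (zero normal derivative across $\partial\mathcal{G}''_{\Omega,\delta}$, pinned to zero only at the single removed vertex $b_{\delta,0}$). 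Your image-charge/Dirichlet construction produces the correct limit only for the dual-type entries; applied to the primal-type entries it would impose the wrong boundary behaviour and yield the wrong harmonic correction, so the combination $F_{0}+F_{1}$ would fail to be the meromorphic function with residue $\frac{1}{4\pi}$ that drives the whole computation.

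The paper handles the Neumann half differently: it does not solve a boundary value problem for those entries at all, but recovers them by summing the discrete Cauchy--Riemann-type relation $f(b_x)-f(b_y)=-i\cot\theta_w\,(g(b_p)-g(b_q))$ along a lattice path from the removed vertex $b_{\delta,0}$ to the target vertex, i.e.\ by discretely integrating the already-controlled Dirichlet-type observable. This is also where the straight portion $L_0$ earns its keep: one needs uniform convergence of the Dirichlet observable (and of the second derivatives of $g_\Omega$) up to a neighbourhood of $z_0=\lim b_{\delta,0}$ on the boundary, which is obtained by reflecting the graph and the Green's function across $L_{0,\delta}$ together with the Kellogg/Schauder estimate. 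In your write-up the straight portion is credited only with "making the Temperley construction work", which understates its role; without the reflection argument at $z_0$ the path integral defining the Neumann-type entries does not converge uniformly and the limit $F_{0}$ is not identified. The rest of your plan (cancellation of the oscillating $H(w)H(b)$ phases, the pairing formula for the $2n$-point moments, the second moment giving $-\frac{1}{\pi}G_{\mathbb{H}}(\phi(z_1),\phi(z_2))$) is consistent with the paper once this Dirichlet/Neumann dichotomy is put in place.
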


Here the ``interior dual graph'' of $\mathcal{G}_{\Omega,\delta}$, is the subgraph of the infinite dual graph $\mathcal{G}_{\delta}'$ of $\mathcal{G}_{\delta}$, such that the vertices of the ``interior dual graph'' are in one-to-one correspondence with the faces of $\mathcal{G}_{\Omega,\delta}$, and each edge of the ``interior dual graph'' is an edge connecting two vertices of the ``interior dual graph'' and is a dual edge of an edge of $\mathcal{G}_{\Omega,\delta}$. In Figure \ref{isorum}, the graph with solid black edges is the primal graph $\mathcal{G}_{\Omega,\delta}$, and the dual graph bounded by the blue edges is the ``interior dual graph'' of $\mathcal{G}_{\Omega,\delta}$. Moreover, $\mathcal{G}_{\Omega,\delta}$ is the interior dual graph of the graph bounded by the outer red edges, with edges given by red lines or blue lines.

In particular, we do not assume the graph to be periodic, and we do not assume the
boundary to be smooth except for a straight portion. The problem solved in this paper
was also mentioned in \cite{CS12}, namely, whether the conformal invariance result for domino
tilings is also true for isoradial graphs. We construct a ``nice" approximation of discrete
isoradial graphs to continuous domains, so that the boundary conditions could be dealt
with and the convergence result follows. Since the bipartite isoradial graph is obtained by
the superposition of an arbitrary isoradial graph and its interior dual graph, the boundary
conditions are discrete analogues of the Dirichlet boundary conditions and the Neumann
boundary conditions. We prove, in this paper, the convergence of the inverse weighted
adjacency matrix ($K^{-1}$) for both boundary conditions. The major difference of this paper
from previous work also lies in the fact that we are working on isoradial graphs, and
the analysis on isoradial graphs is more complicated \cite{CS11}.  The $K^{-1}$ is closely
related to the local statistics of dimers \cite{RK97}, and moreover, the spin-spin correlation of
the Ising model \cite{HS10,JD11}.

\section{Background}

\subsection{Isoradial graphs}\label{ig}

In this subsection we review the definition of the isoradial graph as well as its basic properties. An isoradial graph is a graph which can be embedded into the plane such that each bounded face is inscribable into a circle of common radius. The class of isoradial graphs includes the common regular graphs like the square grid, and hexagonal lattice, but is more general than that. See  Figure \ref{fig:regiso}, Figure \ref{isorum} for examples of isoradial graphs.

\begin{figure}[htb]
\begin{center}
\includegraphics{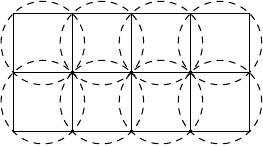}\qquad\qquad\includegraphics{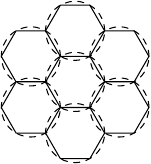}
\caption{Isoradial regular graphs}\label{fig:regiso}
\end{center}
\end{figure}

\begin{figure}[htb]
\begin{center}
\includegraphics{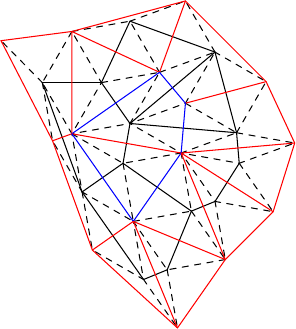}
\caption{Isoradial graph, dual graph and rhombic lattice}\label{isorum}
\end{center}
\end{figure}

An equivalent definition of the isoradial graph is the so-called rhombic tiling. Namely, we can always construct a planar graph $\mathcal{G}_R$ from a planar graph $\mathcal{G}$, such that each face of $\mathcal{G}_R$ is of degree 4. The vertices of $\mathcal{G}_R$ are either vertices of $\mathcal{G}$, or faces of $\mathcal{G}$. Two vertices $v_1,v_2$ of $\mathcal{G}_R$ are connected if and only if $v_1$ is a vertex of $\mathcal{G}$ surrounding the face $v_2$, or vice versa. Hence to see if a graph $\mathcal{G}$ has an isoradial embedding, it suffices to see that if the constructed graph $\mathcal{G}_R$, in which each face is of degree 4, can be embedded onto the plane such that all edges have the same length, i.e., forms a rhombic lattice.  

In a planar graph $\mathcal{G}_R$ with faces of degree 4, a train track is a path of faces (each face being adjacent along an edge to the previous face) which does not turn: on entering a face it exits across the opposite edge. It is proved in \cite{RS05} that a planar graph $\mathcal{G}_R$ with faces of degree 4 has a rhombic embedding if and only if no train track path crosses itself or is periodic; two distinct train tracks cross each other at most once.

In the above construction, each edge of $\mathcal{G}$ corresponds to a rhombus in $\mathcal{G}_R$. A rhombus half angle $\theta_e$ is associated to each edge $e$ of the isoradial graph; it is the angle in $[0,\frac{\pi}{2}]$ formed by the edge and an edge of the corresponding rhombus of $\mathcal{G}_R$, see Figure \ref{isorum}. In Figure \ref{isorum}, the black edges are edges of the primal graph, the blue or red edges are edges of the dual graph, and dashed edges are edges of the rhombic lattice. From the picture it is quite clear that if the primal graph is isoradial and the circumcenter is inside each face of the primal graph, the dual graph is also isoradial.

Each edge of $\mathcal{G}$ is the diagonal of a rhombus in $\mathcal{G}_R$; this diagonal divides the rhombus into two triangles, or half-rhombi. Similarly each dual edge also divides the corresponding rhombus into two half-rhombi. When considering a finite subgraph of the infinite isoradial graph, e.g. the subgraph of the dual isoradial graph bounded by the outer red edges in Figure \ref{isorum}, each interior edge divides a rhombus in the subgraph into two half-rhombi; and each boundary edge corresponds to a triangle, or a half-rhombus of the subgraph; because the other half-rhombus is outside the subgraph.

\subsection{Dimer model}

A perfect matching, or a dimer configuration on an isoradial graph is a choice subset of edges such that each vertex is incident to exactly one edge. 

Let $\mathcal{G}=(V,E)$ be a finite isoradial graph which admits a perfect matching. As discussed in Sect.~\ref{ig}, each edge $e\in E$ is the diagonal of a rhombus, with a unique associated rhombus half-angle $\theta_e\in(0,\frac{\pi}{2})$. We assign to each edge $e\in E$ a critical weight $2\sin\theta_e$. 

Let $\Omega$ be the set of all dimer configurations on $\mathcal{G}$. The probability of a perfect matching $M\in \Omega$, is defined to be
\begin{eqnarray}
\mathbb{P}(M)=\frac{\prod_{e\in M} [2\sin \theta_e]}{Z}.\label{pm}
\end{eqnarray}
Here $Z$ is normalising constant defined by
\begin{eqnarray*}
Z=\sum_{M\in \Omega}\prod_{e\in M}[2\sin\theta_e].
\end{eqnarray*}

\subsection{Height function}\label{dfht}

 The height function is a random, real-valued function defined on faces of the isoradial graph, such that for each perfect matching configuration, the height difference between any two faces is uniquely determined. In other words, each dimer configuration on the isoradial graph gives a unique height function on faces, up to an additive constant.

For any $e\in\mathcal{G}$, let $\mathbb{P}(e)$ be the probability that the edge $e$ is included in a dimer configuration. Assume $w$ is the white endpoint of $e$ and $b$ is the black endpoint of $e$. We define a base flow $\omega_0$ on oriented edges of $\mathcal{G}_{\Omega,\delta}^D$ as follows:
\begin{eqnarray*}
\omega_0(wb)&=&\mathbb{P}(e)\\
\omega_0(bw)&=&-\mathbb{P}(e).
\end{eqnarray*}
where $wb$ means that the edge is oriented from the white vertex to the black vertex, and similarly for $bw$. Since $\mathcal{G}$ admits a dimer configuration, the base flow $\omega_0$ has divergence 1 (resp. $-1$) at each white (resp. black) vertex.

Now, let $M$ be a random perfect matching of $\mathcal{G}$. Then $M$ defines a white-to-black unit flow $\omega_M$ on the edges of $\mathcal{G}$. Namely, $\omega_M$ has value 1 (resp. $-1$) on each edge occupied by a dimer, when the edge is oriented from its white (resp. black) vertex to its black (resp. white) vertex. Other edges have flow 0.

The height function $h$ of a perfect matching $M$, is a real-valued function defined on the faces of $\mathcal{G}$. We define $h$ with respect to the divergence-free flow $\omega_M-\omega_0$ as follows. Let $f_0$ be a fixed face. Define $h(f_0)=0$. Let $f$ be another face of $\mathcal{G}$, and $\gamma_{f_0f}$ be a dual path (a path consisting of dual edges) connecting $f_0$ and $f$. Let $E_{\gamma_{f_0f}}$ be the set of all edges of $\mathcal{G}$ crossed by $\gamma_{f_0f}$. Let $E_{\gamma_{f_0f}}^+$ (resp. $E_{\gamma_{f_0f}}^{-}$) be the set of all edges in $E_{\gamma_{f_0f}}$ such that the white (resp. black) endpoint is on the left of the path $\gamma_{f_0f}$, when travelling along $\gamma_{f_0f}$ from $f_0$ to $f$. Set
\begin{eqnarray}
h(f)&=&\sum_{wb\in E_{\gamma_{f_0f}}^+}(\omega_M-\omega_0)(wb)-\sum_{wb\in E^{-}_{
\gamma_{f_0f}}}(\omega_M-\omega_0)(wb)\label{dh1}\\
&=&\sum_{e\in E^+_{\gamma_{f_0f}}}({\mathbb{I}(e)-\mathbb{P}(e)})-\sum_{e\in E_{\gamma_{f_0f}}^-}(\mathbb{I}(e)-\mathbb{P}(e)),\label{hetd}
\end{eqnarray}
where $\mathbb{I}(e)$ is the indicator of the event that $e$ is present in the dimer configuration. Since $\omega_M-\omega_0$ is a divergence-free flow, and $\mathcal{G}$ is a planar graph, therefore $h$ is well defined, and independent of the path connecting $f_0$ and $f$.

Note that in this definition of height function, we always have the expectation $\mathbb{E}h(f)=0$, for any face $f$ of $\mathcal{G}$.

\subsection{Harmonic function}

The discrete Laplacian operator $\Delta$ on an isoradial graph $\mathcal{G}=(V(\mathcal{G}),E(\mathcal{G}))$, maps a function $H$ defined on $V(\mathcal{G})$, to another function defined on $V(\mathcal{G})$ as follows:
\begin{eqnarray*}
[\Delta H](v)=\left[\sum_{v':v'\sim v}\tan\theta_{vv'}\right]H(v)-\sum_{v':v'\sim v}\left[\tan\theta_{vv'}H(v')\right]
\end{eqnarray*}
where $\theta_{vv'}$ is the rhombus half angle corresponding to the edge $vv'$. 

We can also associate a random walk, or a Markov chain to an isoradial graph, such that the transition probability
\begin{eqnarray*}
p^{v}_1(v_1)=\left\{\begin{array}{cc}\frac{\tan\theta_{vv_1}}{\sum_{v':v'\sim v}\tan\theta_{vv'}}&\mathrm{if}\ v\sim v_1\\ 0& \mathrm{otherwise}\end{array}\right.
\end{eqnarray*}
where $p^v_1(v_1)$ is the probability that a random walk started at $v$ visits $v_1$ at the first step. 

A discrete harmonic function $H$ on an isoradial graph $\mathcal{G}$ is a function defined on vertices of the graph, namely, $H: V(\mathcal{G})\rightarrow \mathbb{R}$ , satisfying
\begin{eqnarray}
[\Delta H](v)=0,\qquad v\in V(\mathcal{G}).\label{mvh}
\end{eqnarray} 
 (\ref{mvh}) can also be considered as a discrete analog of the mean value property.

The mean value property (\ref{mvh}) obviously implies the maximal principle for discrete harmonic functions, i.e., if $\mathcal{G}_{s}$ is a subgraph of the isoradial graph $\mathcal{G}$, and we define boundary vertices of $\mathcal{G}_s$ to be vertices of $\mathcal{G}_s$ that are incident to vertices outside $\mathcal{G}_s$, and interior vertices of $\mathcal{G}_s$ to be vertices in $\mathcal{G}_s$ that are incident only to vertices of $\mathcal{G}_s$. We use $\partial\mathcal{G}_s$ to denote the set of all boundary vertices of $\mathcal{G}_s$. If $H$ is harmonic at any interior vertex of $\mathcal{G}_s$, then the maximal or minimal value of $H$ can only be achieved at boundary vertices of $\mathcal{G}_s$, if $H$ is not constant on all vertices of $\mathcal{G}_s$.

The harmonic extension also has a discrete analog. If $f$ is a real-valued function defined on boundary vertices of $\mathcal{G}_s$, there exists a unique discrete harmonic function $H$ on $V(\mathcal{G}_s)$ such that the values of $H$ on boundary vertices of $\mathcal{G}_s$ are the same as those of $f$. In fact, $H$ can be written down explicitly as follows
\begin{eqnarray}
H(u)=\sum_{a\in\partial \mathcal{G}_s}\omega(u;{a};V(\mathcal{G}_s))\cdot f(a);\label{het}
\end{eqnarray}
for all $u\in V(\mathcal{G}_s)$, where $\omega(u;{a};V(\mathcal{G}_s))$ is the probability that a random walk started at $u$ first hit the boundary $\partial \mathcal{G}_s$ at $a$. $\omega(u; \cdot;V(\mathcal{G}_s))$ is a harmonic function in $u$ and a probability measure on $\partial{G}_s$. If $E\subseteq \partial{G}_s$, 
$\omega(u; E;V(\mathcal{G}_s))$ is the exit probability through $E$ of the random walk started at $u$.

\subsection{Gaussian free field}

The 2-dimensional Gaussian free field (GFF), or massless free field, is a natural 2-dimensional time analog of the Brownian motion.

Let $\Omega$ be a simply-connected bounded domain of the complex plane $\mathbb{C}$. Let $C_0^{\infty}(\Omega)$ be the space of smooth, real-valued functions that are supported on compact subsets of $\Omega$. The GFF on $\Omega$, can also be considered as a Gaussian random vector on the infinite-dimensional Hilbert space $H_0^1(\Omega)$, where $H_0^1(\Omega)$ is the completion of $C_0^{\infty}(\Omega)$ under the $L^2$ norm of derivatives, see \cite{SS07}.

Let $\{e_i\}_{i\geq 1}$ be an $L^2$-orthornormal eigenfunctions for the Laplacian $\Delta=\frac{\partial^2}{\partial x^2}+\frac{\partial^2}{\partial y^2}$ with Dirichlet boundary conditions (i.e., $e_i=0$ on $\partial \Omega$). Let $\eta_i$ be the eigenvalue of $e_i$. The GFF on $\Omega$, $\mathcal{F}$, is a random distribution (continuous linear functional) on $C^1$ functions of $\Omega$, such that, for any $C^1$ function $\phi$,
\begin{eqnarray*}
\mathcal{F}(\phi)=\sum_{i\geq 1}\frac{\alpha_i}{(-\eta_i)^{\frac{1}{2}}}\int_{\Omega}\phi(x,y) e_i(x,y) dx dy,
\end{eqnarray*}
where $\{\alpha_i\}_{i\geq 1}$ are i.i.d. Gaussian random variables with mean 0 and variance 1.

\section{Convergence of discrete holomorphic observables}\label{cvdo}

In this section we introduce the discrete holomorphic observable for perfect matchings on isoradial graphs, namely, the so-called inverse weighted adjacency matrix entries, and prove its convergence in the scaling limit under special boundary conditions.

\subsection{Discrete approximation}\label{da}

In this subsection we discuss the basic setting and assumptions for the discrete approximation, under which the convergence of discrete holomorphic observables will be proved. 

Let $\mathcal{G}_{\delta}$ be an isoradial graph on the whole plane, i.e., each face is inscribable in a circle of radius $\delta$.
Assume all the rhombus half-angles of $\mathcal{G}_{\delta}$ are bounded uniformly away from $0$ and $\frac{\pi}{2}$, i.e., assume there exists a constant $c_0>0$, such that for all edges $e$, the rhombus half angle $\theta_e\in\left[c_0,\frac{\pi}{2}-c_0\right]$.
  Let $\Omega$ be a simply-connected bounded domain of the complex plane $\mathbb{C}$ bounded by a simple closed curve. Let $\mathcal{G}_{\Omega,\delta}$ be an isoradial subgraph of $\mathcal{G}_{
\delta}$, consisting of faces, i.e., each face of $\mathcal{G}_{\Omega,\delta}$ is inscribable in a circle of radius $\delta$. Assume $\mathcal{G}_{\Omega,\delta}$ is also simply-connected, i.e., the boundary of $\mathcal{G}_{\Omega,\delta}$, $\partial\mathcal{G}_{\Omega,\delta}$ has only one connected component. Let $\mathcal{G}'_{\Omega,\delta}$ be the interior dual graph of $\mathcal{G}_{\Omega,\delta}$, and assume that $\mathcal{G}_{\Omega,\delta}$ is the interior dual graph of $\mathcal{G}''_{\Omega,\delta}$, as in Figure \ref{isorum}, where the primal graph $\mathcal{G}_{\Omega,\delta}$ is bounded by the outer boundary consisting of black edges, $\mathcal{G}'_{\Omega,\delta}$, the interior dual graph of $\mathcal{G}_{\Omega,\delta}$, is the graph bounded by the outer boundary consisting of blue edges; and $\mathcal{G}_{\Omega,\delta}''$, which has $\mathcal{G}_{\Omega,\delta}$ as its interior dual graph, is the graph bounded by the outer boundary consisting of red edges.

We make the following assumption.

\begin{assumption}\label{asp}Assume as $\delta\rightarrow 0$, $\mathcal{G}_{\Omega,\delta}$ approximates $\Omega$ in the following sense:
\item $\mathcal{G}_{\Omega,\delta}$ is the largest subgraph of $\mathcal{G}_{\delta}$, consisting of faces and satisfying $\mathcal{G}_{\Omega,\delta}\subset\Omega$.
\end{assumption}

Given $\mathcal{G}_{\Omega,\delta}$, we construct another graph $\mathcal{G}_{\Omega,\delta}^D$ as follows. First of all, let $\tilde{\mathcal{G}}_{\Omega,\delta}^D$ be a graph with a vertex for each vertex, edge and face of $\mathcal{G}_{\Omega,\delta}$, and an edge for each half-edge of $\mathcal{G}_{\Omega,\delta}$ and each half dual-edge as well. The graph $\tilde{\mathcal{G}}_{\Omega,\delta}^D$ is a bipartite planar graph, with black vertices of two types: vertices of $\mathcal{G}_{\Omega,\delta}$ and faces of $\mathcal{G}_{\Omega,\delta}$. White vertices of $\tilde{\mathcal{G}}_{\Omega,\delta}^D$ corresponds to edges of $\mathcal{G}_{\Omega,\delta}$, and dual edges in $\mathcal{G}'_{\Omega,\delta}$. According to the Euler's theorem, $\tilde{\mathcal{G}}_{\Omega,\delta}^D$ has one more black vertices than white vertices. Let $\mathcal{G}_{\Omega,\delta}^D$ be obtained from $\tilde{\mathcal{G}}_{\Omega,\delta}^D$ by removing a black vertex $b_{\delta,0}$ on the boundary. Assume there exists $z_0\in\partial\Omega$, such that $\lim_{\delta\rightarrow 0}b_{\delta,0}=z_0$.

We claim that the graph $\mathcal{G}_{\Omega,\delta}^D$, as constructed above, admits a perfect matching. To see that, we will construct a perfect matching for the graph $\mathcal{G}_{\Omega,\delta}^D$, and the construction is to use the Temperley's bijection \cite{Te,BP,KPW} between perfect matchings and spanning trees, as discussed in the square grid case in \cite{RK00}.  

First of all, the graph $\mathcal{G}_{\Omega,\delta}$ has a spanning tree rooted at $b_{\delta,0}$, i.e., we can give an orientation to each edge of the spanning tree, so that each vertex, except $b_{0,\delta}$, has a unique incoming edge; and $b_{0,\delta}$ has only outgoing edges. For each vertex $v$ of $\mathcal{G}_{\Omega,\delta}$ except $b_{\delta,0}$, we pair the vertex with the white vertex of $\mathcal{G}_{\Omega,\delta}^D$, $w$, on the unique incoming edge of the vertex $v$. In other words, we assume the edge $vw$ of the graph $\mathcal{G}_{\Omega,\delta}^D$ is included in a perfect matching of $\mathcal{G}_{\Omega,\delta}^D$.

Let $W_1$ be the set of all white vertices of $\mathcal{G}_{\Omega,\delta}^D$, included in a perfect matching edge with a black vertex in $\mathcal{G}_{\Omega,\delta}$, and $W_2$ be the set of all the other white vertices of $\mathcal{G}_{\Omega,\delta}^D$. Note that each white vertex of $\mathcal{G}_{\Omega,\delta}^D$ is the intersection of an edge of $\mathcal{G}_{\delta}$ and a dual edge $\mathcal{G}_{\delta}'$. The dual edges passing vertices in $W_2$ pass all the vertices of $\mathcal{G}_{\Omega,\delta}'$, and contain no cycles, because the spanning tree of $\mathcal{G}_{\Omega,\delta}$ is connected. In other words, each connected component of the dual edges passing vertices in $W_2$ is a tree, with a unique vertex in $\mathcal{G}_{\Omega,\delta}''\setminus \mathcal{G}_{\Omega,\delta}'$. Let $T$ be such a tree and $v'$ be the unique vertex of $T$ in $\mathcal{G}_{\Omega,\delta}''\setminus\mathcal{G}_{\Omega,\delta}'$. Give each edge of $T$ an orientation, so that $v'$ has only outgoing edges and all the other vertices of $T$ has exactly one incoming edge. We pair each vertex of $T$, except $v'$, with the white vertex of $\mathcal{G}_{\Omega,\delta}^D$ on the unique incoming edge of that vertex. Then we obtain a perfect matching of $\mathcal{G}_{\Omega,\delta}^D$.

An isoradial embedding of $\mathcal{G}_{\Omega,\delta}$ gives rise to an isoradial embedding of $\tilde{\mathcal{G}}_{\Omega,\delta}^D$. Each rhombus corresponding to an interior edge of $\mathcal{G}_{\Omega,\delta}$ with half-angle $\theta$ is divided into four congruent rhombi in $\tilde{\mathcal{G}}_{\Omega,\delta}^D$. Each triangle (half-rhombus) corresponding to a boundary edge of $\mathcal{G}_{\Omega,\delta}$ is divided into two congruent triangles and one rhombus. The two triangles are part of a rhombus with half-angle $\theta$, and the rhombus has half-angle $\frac{\pi}{2}-\theta$. $\mathcal{G}_{\Omega,\delta}^D$, as a subgraph of $\tilde{\mathcal{G}}_{\Omega,\delta}^D$, inherited an isoradial embedding of $\tilde{\mathcal{G}}_{\Omega,\delta}^D$, in which each face is inscribable in a circle of radius $\frac{\delta}{2}$. 

We define a symmetric matrix $\overline{\partial}$, indexed by vertices of $\mathcal{G}_{\Omega,\delta}^D$, as follows: if $v_1$ and $v_2$ are not adjacent, $\overline{\partial}(v_1,v_2)=0$. If $w$ and $b$ are adjacent vertices, $w$ being white and $b$ being black, then $\overline{\partial}(w,b)=\overline{\partial}(b,w)$ is the complex number of length $\nu(w,b)=2\sin\theta$, with direction pointing from $w$ to $b$. $2\sin\theta$'s are called the critical edge weights for isoradial graphs, which were introduced in \cite{RK02}, as well as the following gauge transformation.

Let $\overline{D}$ be obtained from $\overline{\partial}$ by multiplying edge weights of $\mathcal{G}_{\Omega,\delta}^D$ around each white vertex (coming from an edge of $\mathcal{G}_{\Omega,\delta}$ of half-angle $\theta$) by $\frac{1}{2\sqrt{\sin\theta\cos\theta}}$, that is $\overline{D}(w,b)=S\overline{\partial}(w,b)S^*$, where $S$ is the diagonal matrix defined by $S(w,w)=\frac{1}{2\sqrt{\sin\theta(w)\cos\theta(w)}}$, and $S(b,b)=1$. Then an edge of $\mathcal{G}_{\Omega,\delta}^D$ coming from a ``primal'' edge of $\mathcal{G}_{\Omega,\delta}$ has weight $\sqrt{\tan\theta}$ for $\overline{D}$, and an edge coming from a dual edge has weight $\frac{1}{\sqrt{\tan\theta}}$ for $\overline{D}$. (These edges have weight $2\sin\theta$, $2\cos\theta$ respectively for $\overline{\partial}$.) Let $\overline{D}^*$ denote the transpose conjugate of $\overline{D}$.

\begin{lemma}\label{dn}Restricted to vertices of $\mathcal{G}_{\Omega,\delta}$, we have $\overline{D}^*\overline{D}=\Delta_n$, where $\Delta_n$ is the Laplacian operator of $\mathcal{G}_{\Omega,\delta}$ with $0$ Neumann boundary conditions, and the boundary value at the removed vertex $b_{\delta,0}$ is $0$. Restricted to faces of $\mathcal{G}_{\Omega,\delta}$, we have $\overline{D}^*\overline{D}=\Delta_d$, where $\Delta_d$ is the Laplacian operator of $\mathcal{G}_{\Omega,\delta}'$, the interior dual graph of $\mathcal{G}_{\Omega,\delta}$ with $0$ Dirichlet boundary conditions.
\end{lemma}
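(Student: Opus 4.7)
The plan is to compute $\overline{D}^*\overline{D}$ entry-by-entry and read off the two Laplacians. Since $\mathcal{G}_{\Omega,\delta}^D$ is bipartite with black vertices of two flavors (primal vertices and primal faces) and white vertices of two flavors (primal-edge midpoints and dual-edge midpoints), and since a primal vertex is adjacent only to primal-edge midpoints while a primal face is adjacent only to dual-edge midpoints, the matrix $\overline{D}^*\overline{D}$ is block-diagonal with respect to the partition of black vertices into primal vertices and primal faces. I would handle the two blocks separately.

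For the primal-vertex block I expand $(\overline{D}^*\overline{D})(v_1,v_2) = \sum_w \overline{\overline{D}(w,v_1)}\,\overline{D}(w,v_2)$. The geometric input is that whenever $v_1 \sim v_2$ in $\mathcal{G}_{\Omega,\delta}$ via a primal edge $e$ of rhombus half-angle $\theta$, the midpoint $w=m_e$ is the unique shared white neighbor, and the complex numbers $\overline{D}(w,v_1)$ and $\overline{D}(w,v_2)$ have common modulus $\sqrt{\tan\theta}$ but exactly opposite arguments (because $w$ is the midpoint of the segment $v_1 v_2$). Their conjugate product is therefore $-\tan\theta$, while the diagonal entry is $\sum_{v'\sim v \text{ in } \mathcal{G}_{\Omega,\delta}} \tan\theta_{vv'}$. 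Together these give $(\overline{D}^*\overline{D}\,H)(v) = \sum_{v'\sim v \text{ in }\mathcal{G}_{\Omega,\delta}} \tan\theta_{vv'}[H(v)-H(v')]$, which is the weighted Neumann Laplacian $\Delta_n$: boundary primal edges of $\mathcal{G}_{\Omega,\delta}$ contribute normally, while primal edges leaving $\mathcal{G}_{\Omega,\delta}$ contribute no flux. The removal of $b_{\delta,0}$ drops one row and column, which in the Laplacian formulation corresponds to enforcing boundary value $0$ there.

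The primal-face block is computed by the parallel argument, now using that an edge of $\mathcal{G}_{\Omega,\delta}^D$ coming from a dual edge has weight $\sqrt{\cot\theta}$ and that a dual-edge midpoint has as its two black neighbors the two primal faces at the ends of that dual edge. This yields $(\overline{D}^*\overline{D}\,H)(f) = \sum_{f'\sim f \text{ in } \mathcal{G}'_{\Omega,\delta}} \cot\theta_{ff'}[H(f)-H(f')]$. The contrast between Neumann on the primal side and Dirichlet on the dual side arises from the primal/dual asymmetry at $\partial\mathcal{G}_{\Omega,\delta}$: every boundary primal edge produces a white vertex in $\mathcal{G}_{\Omega,\delta}^D$, but the dual edge across it is not in $\mathcal{G}'_{\Omega,\delta}$ and so contributes no white vertex. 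On the dual side, this missing contribution is exactly the discrete expression of assigning value $0$ to the would-be external dual vertex across each boundary primal edge, which is the Dirichlet boundary condition.

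The main obstacle is to check carefully that the boundary subdivision of the rhombic embedding, in which each boundary primal edge contributes an additional rhombus of half-angle $\frac{\pi}{2}-\theta$, does not create any spurious white neighbor incident to an adjacent primal face or primal vertex. If it did, the diagonal of one of the blocks would pick up an extra term and the identification with $\Delta_n$ or $\Delta_d$ would fail. The stipulation that white vertices of $\mathcal{G}_{\Omega,\delta}^D$ arise only from edges of $\mathcal{G}_{\Omega,\delta}$ and from dual edges of $\mathcal{G}'_{\Omega,\delta}$ rules this out, and this bookkeeping is precisely what enforces the Neumann/Dirichlet asymmetry between the two blocks.
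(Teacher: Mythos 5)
There is a genuine gap, and it sits exactly where you declared the problem trivial. Your block-diagonality claim rests on the premise that ``a primal vertex is adjacent only to primal-edge midpoints while a primal face is adjacent only to dual-edge midpoints,'' i.e.\ that the white vertices coming from primal edges and from dual edges are distinct. That is not the construction in this paper: $\tilde{\mathcal{G}}_{\Omega,\delta}^D$ has \emph{one} white vertex per edge of $\mathcal{G}_{\Omega,\delta}$ (placed at the crossing of the edge with its dual edge), and that single white vertex is joined by half-edges to the two primal endpoints \emph{and} by half-dual-edges to the two adjacent faces; this is why the Euler count gives exactly one more black vertex than white, and why later in the paper a white vertex $v_1$ is explicitly described as having four neighbors $b_1,b_2$ (primal) and $b_3,b_4$ (faces). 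Consequently a primal vertex $b$ and an incident face $b'$ share \emph{two} common white neighbors $w_1,w_2$ (the white vertices of the two edges from $b$ bounding $b'$), and the entry $\overline{D}^*\overline{D}(b,b')$ is a priori a nonzero sum of two terms. The heart of the lemma is the computation showing these two contributions cancel:
\begin{eqnarray*}
\overline{D}^*\overline{D}(b,b')=\alpha_1\sqrt{\tan\theta_1}\,\frac{-i\overline{\alpha}_1}{\sqrt{\tan\theta_1}}+\alpha_2\sqrt{\tan\theta_2}\,\frac{i\overline{\alpha}_2}{\sqrt{\tan\theta_2}}=-i+i=0,
\end{eqnarray*}
which uses precisely the reciprocal weights $\sqrt{\tan\theta}$ and $1/\sqrt{\tan\theta}$ and the quarter-turn between an edge and its dual, together with an orientation convention ($e_1$ on the right). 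Your proof never performs this cancellation, so it does not establish that the operator decouples into a primal block and a dual block, which is the content of the statement.

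The rest of your argument (diagonal entries $\sum\tan\theta$, off-diagonal primal--primal entries $-\tan\theta$ via opposite arguments at the rhombus center, $\cot\theta$ weights on the dual side, and the Neumann/Dirichlet asymmetry at the boundary including the role of $b_{\delta,0}$) is consistent with the paper's computation, up to the harmless diagonal normalization of $\Delta$. But the ``main obstacle'' you identify --- spurious white neighbors from the boundary subdivision --- is not the real obstacle; the vertex--face cross terms in the interior are, and they must be shown to vanish rather than assumed absent.
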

\begin{proof} Using the same computation as in \cite{RK02}, it is straightforward to check the following
\begin{enumerate}

\item Let $b$ be a black vertex of $\mathcal{G}_{\Omega,\delta}^D$. Assume in $\mathcal{G}_{\Omega,\delta}^D$, $b$ has neighbors $w_1,...,w_k$, so that for $1\leq j\leq k$, the edge $bw_j$ has rhombus half-angle $\theta_j$, we have
\begin{eqnarray*}
\overline{D}^*\overline{D}(b,b)=\sum_{j=1}^{k}\tan\theta_j.
\end{eqnarray*}

\item If $b,b'$ are adjacent vertices in $\mathcal{G}_{\Omega,\delta}$, so that the edge $bb'$ has rhombus half-angle $\theta$ in $\mathcal{G}_{\Omega,\delta}$, we have
\begin{eqnarray*}
\overline{D}^*\overline{D}(b,b')=-\tan\theta,
\end{eqnarray*}
and similar computations apply for the case when $b,b'$ are adjacent vertices in $\mathcal{G}'_{\Omega,\delta}$.

\item If $b,b'$ are of distance 2 in $\mathcal{G}_{\Omega,\delta}^D$, but correspond to a vertex and a face of $\mathcal{G}_{\Omega,\delta}$, then
\begin{eqnarray*}
\overline{D}^*\overline{D}(b,b')=0.
\end{eqnarray*}

\end{enumerate}

Unlike the computations in \cite{RK02}, which consider the whole-plane operators with no boundaries, we also consider boundary conditions here.
Let $f$ be a function defined on the set of vertices of $\mathcal{G}_{\Omega,\delta}^D$. First assume that $b$ is not adjacent to $b_{\delta,0}$ in $\mathcal{G}_{\Omega,\delta}$. If $b$ is an interior vertex of $\mathcal{G}_{\Omega,\delta}$, then the above computation gives
\begin{eqnarray*}
\overline{D}^*\overline{D}f(b)=\sum_{b_k\sim b}\tan\theta_k(f(b)-f(b_k)).
\end{eqnarray*}
This is the same as the effect of discrete Laplacian operator on the whole-plane graph $\mathcal{G}_{\delta}$. If $b$ is a boundary vertex of $\mathcal{G}_{\Omega,\delta}$, then the above computation gives
\begin{eqnarray*}
\overline{D}^*\overline{D}f(b)=\sum_{b_k\sim b,b_k\in\mathcal{G}_{\Omega,\delta}}\tan\theta_k(f(b)-f(b_k)).
\end{eqnarray*}
This is the same as the effect of discrete Laplacian operator on the whole-plane graph $\mathcal{G}_{\delta}$, if we require that on all the vertices adjacent to $b$ in $\mathcal{G}_{\delta}\setminus\mathcal{G}_{\Omega,\delta}$, $f$ has the same value as $f(b)$. If we consider the boundary of the domain consisting of dual edges of the adjacent edges of $b$ in $\mathcal{G}_{\delta}\setminus\mathcal{G}_{\Omega,\delta}$, i.e., $\partial\mathcal{G}_{\Omega,\delta}''$, this is the discrete analogue of $0$ normal derivative along the boundary.

Now assume $b$ is an adjacent vertex of $b_{\delta,0}$ in $\mathcal{G}_{\Omega,\delta}$. After imposing the Neumann boundary conditions on vertices adjacent to $b$, but not in $\tilde{G}_{\Omega,\delta}$, we have
\begin{eqnarray*}
\overline{D}^*\overline{D}f(b)=\sum_{b_k\sim b,b_k\neq b_{\delta,0}}\tan\theta_k(f(b)-f(b_k))+\tan\theta_0f(b).
\end{eqnarray*}
This is the same as the effect of the discrete Laplacian on the whole-plane graph $\mathcal{G}_{\delta}$, by requiring that $f(b_{\delta,0})=0$.

Now assume $b'$ is a vertex of $\mathcal{G}_{\Omega,\delta}'$, and $h$ be a function defined on vertices of $\mathcal{G}_{\Omega,\delta}'$. We have
\begin{eqnarray*}
\overline{D}^*\overline{D}h(b')=\sum_{b_k'\sim b',b_k'\in \mathcal{G}_{\Omega,\delta}'}\tan\theta_k(h(b')-h(b_k'))+\sum_{b_j'\sim b', b_j'\in\mathcal{G}_{\Omega,\delta}''\setminus\mathcal{G}_{\Omega,\delta}'}\tan\theta_j h(b').
\end{eqnarray*}

Similar argument shows that the effect of $\overline{D}^*\overline{D}$ for $h$ on each vertex of $\mathcal{G}_{\Omega,\delta}'$ will be the same as the discrete Laplacian of the whole plane graph $\mathcal{G}_{\delta}'$ for $h$ at the vertex of $\mathcal{G}_{\delta}'$, if we require the value of $h$ on $\partial \mathcal{G}_{\Omega,\delta}''$ to be $0$.
\end{proof}

The following lemma states the relation between the inverse matrix $\overline{\partial}^{-1}$ with the local statistics of the perfect matchings. It was first discovered for the hexagonal lattice in \cite{RK97}, then reformulated in the setting of isoradial graphs in \cite{RK02}. It also appears in \cite{dd}, although in a less general form.

\begin{lemma}(\cite{RK97,RK02})The dimer partition function on $\mathcal{G}_{\Omega,\delta}^D$ is equal to $\sqrt{|\det\overline{\partial}|}$. The probability of edges $w_1b_1,...,w_kb_k$ occurring in a configuration chosen with respect to Boltzmann measure $\mu$ is 
\begin{eqnarray*}
\prod_{i=1}^{k}\overline{\partial}(w_i,b_i)\det_{1\leq i,j\leq k}\overline{\partial}^{-1}(w_i,b_j)
\end{eqnarray*}
\end{lemma}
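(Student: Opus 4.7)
The plan is to verify that the complex matrix $\overline{\partial}$ is a Kasteleyn matrix for the bipartite planar graph $\mathcal{G}_{\Omega,\delta}^D$, and then invoke the standard Kasteleyn--Kenyon framework. I would first exploit the bipartite structure: $\overline{\partial}$ is the symmetric matrix whose only nonzero blocks are $K$ and its transpose conjugate $K^*$, where $K(b,w) = \overline{\partial}(b,w)$ is the complex number of length $2\sin\theta$ pointing from the white vertex $w$ to the black vertex $b$.

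Next I would verify the Kasteleyn phase condition face by face. Each bounded face of the superposition graph $\mathcal{G}_{\Omega,\delta}^D$ is a quadrilateral inscribed in a circle of radius $\delta/2$ with two white and two black vertices $w_1,b_1,w_2,b_2$ in cyclic order. Using that each $\overline{\partial}(w_i,b_j)$ is literally the complex vector from $w_i$ to $b_j$ and that opposite sides of the inscribed rhombus are parallel, a direct geometric computation yields
\[
\frac{\overline{\partial}(w_1,b_1)\,\overline{\partial}(w_2,b_2)}{\overline{\partial}(w_1,b_2)\,\overline{\partial}(w_2,b_1)}=-1,
\]
which is exactly Kenyon's phase condition for a face of degree four. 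Boundary faces, which by the construction preceding the lemma are still rhombi (of complementary half-angle $\pi/2-\theta$), satisfy the same identity. With that established, Kasteleyn's theorem for bipartite planar graphs gives that $|\det K|$ equals the Boltzmann-weighted sum over perfect matchings of $\mathcal{G}_{\Omega,\delta}^D$, namely the partition function $Z$. Since $\overline{\partial}$ has only $K$ and $K^*$ as its nonzero blocks, $|\det \overline{\partial}| = |\det K|^2$, yielding $Z = \sqrt{|\det \overline{\partial}|}$.

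For the local statistics formula, I would fix distinct edges $w_1b_1,\dots,w_kb_k$ and use the fact that the Boltzmann probability that all of them appear in a random matching equals
\[
\frac{Z\bigl(\mathcal{G}_{\Omega,\delta}^D\setminus\bigcup_i\{w_i,b_i\}\bigr)}{Z(\mathcal{G}_{\Omega,\delta}^D)}\prod_{i=1}^k \nu(w_i,b_i).
\]
Writing both partition functions as absolute values of Kasteleyn determinants and applying Jacobi's identity, which expresses the determinant of the submatrix of $K$ obtained by removing rows $b_1,\dots,b_k$ and columns $w_1,\dots,w_k$ as $\pm \det K$ times the $k\times k$ minor $\det_{i,j}K^{-1}(w_i,b_j)$, the ratio collapses to $\prod_i \overline{\partial}(w_i,b_i)\det_{i,j}\overline{\partial}^{-1}(w_i,b_j)$. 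The phases from the edge vectors $\overline{\partial}(w_i,b_i)$ cancel against the phases generated by the Kasteleyn gauge of the minor, so this combination is automatically real and nonnegative, as it must be since it represents a probability.

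The main obstacle is the face-by-face phase verification in the first step, which must also be carried out at the modified boundary faces described in the construction. This calculation is precisely what was done in \cite{RK02}, so I would cite that result and use Kasteleyn's theorem together with the Jacobi identity for the remaining routine determinantal manipulations.
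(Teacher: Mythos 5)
The paper offers no proof of this lemma at all---it is quoted verbatim from \cite{RK02}---and your outline (Kasteleyn phase condition face by face, $|\det\overline{\partial}|=|\det K|^2$ from the bipartite block structure, then Jacobi's identity for the minors) is precisely the standard argument behind that citation, which you also ultimately defer to, so the two treatments are essentially in agreement. One small imprecision worth noting: for a cyclic quadrilateral the alternating product of the edge vectors is a negative real number but not $-1$ in general (by Ptolemy the two products of opposite chord lengths need not coincide); Kenyon's flatness condition only requires the argument of that product to be $\pi$, which is what your concyclicity observation actually delivers, so the proof is unaffected.
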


\subsection{Convergence with Dirichlet boundary conditions}

From Lemma \ref{dn}, we know that there are two types of $\overline{D}^{-1}$ entries, depending on whether the black vertex is a vertex of the primal graph or a vertex of the dual graph. In this subsection we deal with the convergence when the black vertex is a vertex of the dual graph. We start with a technical lemma proved in \cite{CS11}, which says that uniformly bounded sequence of harmonic functions, has a uniformly convergent subsequence on compact subsets of the domain.

\begin{lemma}\label{dhch}(\cite{CS11})\label{cvhd}Let $H^{\delta}:V(\mathcal{G}_{\Omega,\delta}) \rightarrow \mathbb{R}$ be a (real-valued) discrete harmonic function defined on vertices of the isoradial graph $\mathcal{G}_{\Omega,\delta}$ with $\delta\rightarrow 0$. If $\{H^{\delta}\}_{\delta}$ are uniformly bounded  on $\Omega$, i.e.,
\begin{eqnarray*}
\max_{u\in V(\mathcal{G}_{\Omega,\delta})}|H^{\delta}(u)|\leq M<+\infty,
\end{eqnarray*}
where $M$ is a constant independent of $\delta$, then there exists a subsequence $\delta_{k}\rightarrow 0$, and two functions $h:\Omega\rightarrow \mathbb{R}$, $f:\Omega\rightarrow\mathbb{C}$, such that
\begin{eqnarray*}
H^{\delta^k}\rightarrow h,\qquad uniformly\ on\ compact\ subsets\ K\subset\Omega,
\end{eqnarray*}
and
\begin{eqnarray*}
\frac{H^{\delta_k}(u_k^{+})-H^{\delta_k}(u_k^{-})}{|u_k^{+}-u_k^{-}|}\rightarrow\Re\left[f(u)\frac{u_k^{+}-u_k^{-}}{|u_k^{+}-u_k^{-}|}\right],
\end{eqnarray*}
if $u_k^{\pm}\in V(\mathcal{G}_{\Omega,\delta})$, $u_k^{+}\sim u_k^{-}$ and $u_k^{\pm}\rightarrow u\in K\subset \Omega$ as $k\rightarrow\infty$. Moreover, the limit function $h$, $|h|\leq M$, is harmonic in $\Omega$ and $f=h_x'-ih_y'=2\partial h$ is analytic in $\Omega$.
\end{lemma}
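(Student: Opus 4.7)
The plan is a discrete Arzelà--Ascoli argument: first produce a uniform equicontinuity estimate for the family $\{H^\delta\}$, then extract a subsequential limit, and finally identify it as harmonic with the correct derivative behavior.

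\emph{Equicontinuity.} The central input is a uniform Hölder bound: for every compact $K \subset \subset \Omega$ there exist $C_K>0$ and $\alpha_K\in(0,1)$, independent of $\delta$, such that
$$|H^{\delta}(u)-H^{\delta}(v)| \leq C_K\, M\, (|u-v|+\delta)^{\alpha_K}$$
for all $u,v\in V(\mathcal{G}_{\Omega,\delta})\cap K$. The uniform angle assumption $\theta_e\in[c_0,\tfrac{\pi}{2}-c_0]$ imposed at the start of the section makes the random walk with transition probabilities $p^v_1$ uniformly elliptic, and combined with the discrete maximum principle that follows from (\ref{mvh}) it yields a discrete Harnack inequality by the standard chaining argument over overlapping disks whose radii stay comparable to $\mathrm{dist}(K,\partial\Omega)$. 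Iterating the Harnack oscillation bound on dyadic scales produces the displayed Hölder estimate. I expect this to be the principal technical obstacle: as highlighted in the introduction, discrete harmonic functions on isoradial graphs are \emph{not} Lipschitz in general, so one must work with a genuine Hölder exponent $\alpha_K\in(0,1)$ quantitatively controlled by $c_0$, rather than hoping for $\alpha_K=1$.

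\emph{Extraction and harmonicity of the limit.} Interpolate each $H^{\delta}$ piecewise-affinely on the rhombi of $\tilde{\mathcal{G}}_{\Omega,\delta}^D$ to obtain continuous functions $\widetilde{H}^{\delta}$ on $\Omega$; the Hölder estimate above transfers to these, and $\|\widetilde{H}^{\delta}\|_\infty\leq M$. Arzelà--Ascoli then furnishes a subsequence $\delta_k\to 0$ and a continuous limit $h:\Omega\to\mathbb{R}$ with $|h|\leq M$ and $\widetilde{H}^{\delta_k}\to h$ uniformly on compact subsets. To see $h$ is harmonic, fix a disk $B(z,r)\subset\subset\Omega$ and let $\mathcal{G}_s$ denote the discrete approximation of $B(z,r)$ inside $\mathcal{G}_{\Omega,\delta_k}$. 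By (\ref{het}), $H^{\delta_k}$ on $\mathcal{G}_s$ is the exit-probability average of its own boundary values. Uniform ellipticity implies that the isoradial random walks converge to planar Brownian motion, so the discrete exit measures $\omega(u_k;\cdot;V(\mathcal{G}_s))$ converge weakly to the harmonic measure on $\partial B(z,r)$. Combined with uniform convergence $H^{\delta_k}\to h$ on $\overline{B(z,r)}$, passing to the limit in (\ref{het}) recovers the continuous mean value property, hence $h$ is harmonic in $\Omega$.

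\emph{Convergence of discrete derivatives.} The same Hölder estimate, applied to differences of $H^{\delta}$ across neighboring vertices, also controls the discrete edge-derivatives $(H^{\delta}(u^{+})-H^{\delta}(u^{-}))/|u^{+}-u^{-}|$ locally uniformly in $\delta$ (by comparing the rescaled function $\delta^{-\alpha_K}H^{\delta}$ on a slightly enlarged compact set and differentiating the Harnack argument). Passing to a further subsequence if needed, these discrete directional derivatives converge to a continuous limit along each rhombus direction. Testing against smooth compactly supported $\varphi$ and using summation by parts against $\overline{D}^*\overline{D}$, which by Lemma \ref{dn} is the Laplacian with the appropriate boundary condition, identifies the limit with the continuous directional derivative of $h$. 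Since $h$ is real harmonic, this directional derivative along $e^{i\theta}=(u_k^{+}-u_k^{-})/|u_k^{+}-u_k^{-}|$ equals $\Re[(h_x'(u)-ih_y'(u))\,e^{i\theta}]=\Re[f(u)\,e^{i\theta}]$ with $f:=2\partial h$, and the Cauchy--Riemann equations for $\nabla h$ give analyticity of $f$, completing the proof along the extracted subsequence.
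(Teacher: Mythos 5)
First, note that the paper does not actually prove this lemma: it is imported verbatim from \cite{CS11}, so there is no internal proof to compare your attempt against. Your outline follows the same general strategy as that source (compactness via an equicontinuity estimate, identification of the limit via convergence of discrete harmonic measure, then convergence of the discrete gradients), but as written it has a genuine gap in the derivative step.

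The problem is your decision to work only with a H\"older estimate of exponent $\alpha_K\in(0,1)$. Such a bound gives $|H^{\delta}(u^{+})-H^{\delta}(u^{-})|\leq C\delta^{\alpha_K}$ for neighbouring vertices, hence a difference quotient of order $\delta^{\alpha_K-1}$, which diverges as $\delta\rightarrow 0$; it controls nothing about the discrete derivatives, and the proposed rescaling by $\delta^{-\alpha_K}$ does not repair this. What is actually needed (and what \cite{CS11} proves) is a genuine interior gradient estimate for discrete \emph{harmonic} functions, of the form $|H^{\delta}(u^{+})-H^{\delta}(u^{-})|\leq C\,\delta\,\sup|H^{\delta}|/d(u,\partial\Omega)$, obtained by iterating the discrete Harnack inequality; the caveat in the introduction about non-Lipschitz behaviour concerns discrete \emph{holomorphic} functions, not harmonic ones, so you are being cautious in exactly the wrong place. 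With that Lipschitz estimate (and the analogous second-difference estimate, which is what makes the discrete gradients themselves precompact), your extraction and weak-formulation identification of the limiting gradient does go through. A second, smaller issue: the claim that ``uniform ellipticity implies the walks converge to planar Brownian motion'' is not quite right --- a general uniformly elliptic walk converges to a Brownian motion with some anisotropic covariance, and the limit $h$ would then be harmonic for the wrong operator. It is the specific isoradial conductances $\tan\theta_e$ that make the discrete Laplacian approximate the standard continuous Laplacian (the same approximation property invoked for Lemma \ref{cvt}), and this must be cited at the point where you pass to the limit in the mean value property.
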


Assume $v_1$ is a white vertex of $\mathcal{G}_{\Omega,\delta}^D$, consider $\overline{D}^{-1}$ as a function of $v_2$, where $v_2$ is a black vertex of $\mathcal{G}_{\Omega,\delta}^D$. $v_1$ is adjacent to four vertices: two vertices corresponding to vertices of $\mathcal{G}_{\delta}$, denoted by $b_1$ and $b_2$, and two vertices corresponding to faces of $\mathcal{G}_{\delta}$, denoted by $b_3$ and $b_4$. Let $\xi_1,\xi_2,\xi_3,\xi_4$ denote the unit vector pointing from $v_1$ to $b_1,b_2,b_3,b_4$, respectively. Let $\theta_{v_1}$ denote the rhombus half-angle corresponding to the edge $b_1b_2$. Explicit computations show that
\begin{eqnarray*}
\overline{D}^*\overline{D}\overline{D}^{-1}(v_2,v_1)&=&\overline{D}^*(v_2,v_1)\\
&=&\left\{\begin{array}{cc}0&\mathrm{if}\ v_1\ \mathrm{and}\ v_2\ \mathrm{are\ not\ adjacent}\\ \frac{\sqrt{\tan\theta_{v_1}}}{\xi_i}&\mathrm{if}\ v_2=b_i,i=1,2\\ \frac{1}{\sqrt{\tan\theta_{v_1}}\xi_i}&\mathrm{if}\ v_2=b_i, i=3,4 \end{array}\right.
\end{eqnarray*}

\begin{lemma}\label{db}Let $z_1$ be a point in the interior of $\Omega$, and let $z_2\in\overline{\Omega}$, $z_2\neq z_1$. Let $v_1$ be a white vertex of $\mathcal{G}_{\Omega,\delta}^D$ nearest to $z_1$, which corresponds to an edge in $\mathcal{G}_{\Omega,\delta}$ with a fixed rhombus half-angle $\theta_{v_1}$, and let $v_2$ be a vertex of $\mathcal{G}_{\Omega,\delta}'$ nearest to $z_2$. Let $\xi_{3,\delta}$ be the unit vector pointing from $b_4$ to $b_3$ in $\mathcal{G}'_{\Omega,\delta}$, as discussed above. Furthermore, assume that there exists a fixed unit vector $\xi_3$, so that
\begin{eqnarray*}
\lim_{\delta\rightarrow 0}\xi_{3,\delta}=\xi_3.
\end{eqnarray*}
Then
\begin{eqnarray*}
\lim_{\delta\rightarrow 0}\frac{1}{\delta}\overline{D}^{-1}(v_2,v_1)&=&2\sqrt{\sin\theta_{v_1}\cos\theta_{v_1}}\frac{\partial_1}{\partial\xi_3}g_{\Omega}(z_1,z_2)\\
\lim_{\delta\rightarrow 0}\frac{1}{\delta}\overline{\partial}^{-1}_{\Omega,\delta}(v_2,v_1)&=&\frac{\partial_1}{\partial\xi_3}g_{\Omega}(z_1,z_2)
\end{eqnarray*}
where $g_{\Omega}(z_1,z_2)$ is the Green's function of the region $\Omega$, and the derivative is the directional derivative along the $\xi_3$ with respect to the first variable. Moreover, let $\mathcal{D}$ be the diagonal set of $\overline{\Omega}\times\overline{\Omega}$, then the convergence is uniform for $(z_1,z_2)$ in compact subsets of $\Omega\times\overline{\Omega}\setminus\mathcal{D}$.
\end{lemma}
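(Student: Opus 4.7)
The plan is to use Lemma~\ref{dn} to rewrite $\overline{D}^{-1}(\cdot, v_1)$, restricted to vertices of $\mathcal{G}'_{\Omega,\delta}$, as a discrete directional derivative of the Dirichlet Green's function on $\mathcal{G}'_{\Omega,\delta}$, and then pass to the scaling limit by comparing with the whole-plane Green's function and invoking Lemmas~\ref{dhch} and~\ref{cvt}. Concretely, applying $\overline{D}^*$ to $\overline{D}\,\overline{D}^{-1}(\cdot, v_1)=e_{v_1}$ gives $\overline{D}^*\overline{D}\,\overline{D}^{-1}(\cdot, v_1) = \overline{D}^*e_{v_1}$; by Lemma~\ref{dn}, restricted to dual vertices of $\mathcal{G}'_{\Omega,\delta}$ the left-hand side equals $\Delta_d\,\overline{D}^{-1}(\cdot, v_1)$, with $\Delta_d$ the $0$-Dirichlet Laplacian, while the right-hand side is supported at the two dual neighbors $b_3,b_4$ of $v_1$ and, since $\xi_4=-\xi_3$, takes opposite values $\pm\tfrac{1}{\sqrt{\tan\theta_{v_1}}\,\xi_3}$ there. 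Letting $G^\delta_d$ denote the discrete Dirichlet Green's function on $\mathcal{G}'_{\Omega,\delta}$, this yields the representation
\begin{equation*}
\overline{D}^{-1}(v_2, v_1) \;=\; \frac{1}{\sqrt{\tan\theta_{v_1}}\,\xi_3}\bigl(G^\delta_d(v_2, b_3) - G^\delta_d(v_2, b_4)\bigr).
\end{equation*}
Because $b_3-b_4$ has length $2\delta\cos\theta_{v_1}$ in direction $\xi_3$, the quantity $\overline{D}^{-1}(v_2, v_1)/\delta$ is, up to explicit $\theta_{v_1}$- and $\xi_3$-dependent prefactors, a discrete directional derivative of $G^\delta_d(v_2, \cdot)$ along $\xi_3$.

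To pass to the limit I would decompose $G^\delta_d(v_2, b)=G^\delta_0(v_2, b)-H^\delta(v_2, b)$, where $G^\delta_0$ is the whole-plane isoradial Green's function on $\mathcal{G}'_{\delta}$ (whose scaling limit and derivative asymptotics are supplied by Kenyon's explicit $K^{-1}$ formula~\cite{RK02}) and $H^\delta(\cdot, b)$ is the discrete harmonic correction on $\mathcal{G}'_{\Omega,\delta}$ chosen so that $G^\delta_d = G^\delta_0 - H^\delta$ satisfies the $0$-Dirichlet condition of $\Delta_d$. The whole-plane piece contributes, in the $\delta\to 0$ limit, a directional derivative of the continuous log-kernel $g_0(z, z')=-\tfrac{1}{2\pi}\log|z-z'|$, uniformly on compact subsets of $\Omega\times\overline{\Omega}\setminus\mathcal{D}$. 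For the harmonic piece, $\tfrac{1}{\delta}(H^\delta(\cdot, b_3)-H^\delta(\cdot, b_4))$ is uniformly bounded (by the maximum principle together with Kenyon's asymptotics for $G^\delta_0$ on the discrete boundary); Lemma~\ref{dhch} then produces subsequential convergence to a harmonic function in $\Omega$, and Lemma~\ref{cvt} identifies its boundary trace on $\partial\Omega$ with $\partial_{\xi_3}g_0(z_1, \cdot)|_{\partial\Omega}$. The representation $g_\Omega(z_1, z_2) = g_0(z_1, z_2) - \tilde h(z_1, z_2)$ of the continuous Dirichlet Green's function of $\Omega$ then identifies the limit of $\tfrac{1}{\delta}(G^\delta_d(v_2, b_3)-G^\delta_d(v_2, b_4))$ as $2\cos\theta_{v_1}\,\partial_{\xi_3}g_\Omega(z_1, z_2)$; combining with the prefactor from the previous paragraph and simplifying the geometric constants in accordance with the notation $\partial_1/\partial\xi_3$ produces the stated coefficient. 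Uniqueness of the continuous limit promotes the subsequential convergence to full convergence.

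The hardest step will be the boundary identification. The discrete boundary $\partial\mathcal{G}'_{\Omega,\delta}$ approximates $\partial\Omega$ only in Hausdorff distance (no regularity beyond the straight segment $L_0$ is assumed), so discrete boundary data cannot be matched directly to continuous data on $\partial\Omega$. Lemma~\ref{cvt} is essential here: its Beurling-type polynomial decay of discrete harmonic measure near the boundary ensures that small perturbations of the boundary position and of the data contribute only uniformly vanishing errors. Together with Lemma~\ref{dhch} and the local-uniform asymptotics of $G^\delta_0$, this supplies the uniform convergence on compact subsets of $\Omega\times\overline{\Omega}\setminus\mathcal{D}$ required by the statement.
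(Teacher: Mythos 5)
Your proposal is correct and follows essentially the same route as the paper: both reduce $\overline{D}^{-1}(v_2,v_1)$ to a discrete directional derivative of a Green's function, subtract the whole-plane term (whose asymptotics come from Kenyon's explicit formula in \cite{RK02}), and control the remaining discrete harmonic correction via the compactness result of Lemma~\ref{dhch} and the weak Beurling estimate of Lemma~\ref{cvt} to identify its boundary values. Your phrasing through the finite-volume Dirichlet Green's function $G^\delta_d$ and the decomposition $G^\delta_d=G^\delta_0-H^\delta$ is just a reordering of the paper's subtraction of $\overline{D}_0^{-1}$, not a genuinely different argument.
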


\begin{proof}Let $\overline{D}_0$ be the corresponding matrix defined for the whole-plane graph $\mathcal{G}_{\delta}^D$, obtained by the superposition of $\mathcal{G}_{\delta}$ and $\mathcal{G}'_{\delta}$. Then
\begin{eqnarray*}
\frac{1}{\delta}\overline{D}_0^{-1}(v_2,v_1)=\frac{1}{\delta\xi_3\sqrt{\tan\theta_{v_1}}}[G'_{\delta}(v_2,b_3)-G'_{\delta}(v_2,b_4)]\qquad \mathrm{if}\ v_2\in V(\mathcal{G}'_{\Omega,\delta}),
\end{eqnarray*}
where $G_{\delta}'$ is the discrete Green's function on $\mathcal{G}_{\delta}'$, the dual graph of $\mathcal{G}_{\delta}$ on the whole plane. It is proved in \cite{RK02,Bu} that
\begin{eqnarray*}
G_{\delta}'(v_1,v_2)=-\frac{1}{2\pi}\log\left|\frac{v_2-v_1}{\delta}\right|-\frac{\gamma_{Euler}+\log 2}{2\pi}+O\left(\frac{\delta^2}{|v_2-v_1|^2}\right)
\end{eqnarray*}
where $\gamma_{Euler}$ is the Euler's constant. Then
\begin{eqnarray}
\frac{1}{\delta\xi_3\sqrt{\tan\theta_{v_1}}}[G'_{\delta}(v_2,b_3)-G'_{\delta}(v_2,b_4)]=\frac{\sqrt{\sin\theta_{v_1}\cos\theta_{v_1}}}{\pi\xi_3}\Re\frac{\xi_3}{v_2-v_1}+O(\delta),\label{lmd}
\end{eqnarray}
and the coefficient of $\delta$ in the error term is bounded uniformly for any $(v_1,v_2)$ in compact subsets of $\Omega\times\overline{\Omega}\setminus \mathcal{D}$. Fix $v_1$, and let
\begin{eqnarray*}
H_{\delta}(v_2)=\frac{1}{\delta}[\overline{D}^{-1}(v_2,v_1)-\overline{D}_0^{-1}(v_2,v_1)],
\end{eqnarray*}

Evidently, $H_{\delta}(v_2)$ is satisfies the following conditions
\begin{eqnarray*}
&&\Delta H_{\delta}(v)=0\qquad\qquad\forall v\in V(\mathcal{G}'_{\Omega,\delta})\\
&&H_{\delta}(v)|_{v\in V(\partial \mathcal{G}''_{\Omega,\delta})}=-\frac{\sqrt{\sin\theta_{v_1}}\cos\theta_{v_1}}{\pi\xi_3}\Re\frac{\xi_3}{v-v_1}+O(\delta)+o(1).
\end{eqnarray*}

Using the same argument as in the proof of Theorem 3.10 in \cite{CS11}, we can show that as $\delta\rightarrow 0$, the sequence $\{H_{\delta}\}_{\delta}$ converges uniformly on compact subsets of $\Omega$ to a continuous (complex valued) harmonic function $h(z)$ on $\Omega$, with boundary value given by
\begin{eqnarray*}
h(z)|_{z\in\partial \Omega}=-\frac{\sqrt{\sin\theta_{v_1}\cos\theta_{v_1}}}{\pi\xi_3}\Re\frac{\xi_3}{z-z_1}.
\end{eqnarray*}

Then $\overline{D}^{-1}(z_1,z_2)$ converges to the function
\begin{eqnarray*}
h(z_2)+\frac{\sqrt{\sin\theta_{v_1}\cos\theta_{v_1}}}{\pi\xi_3}\Re\frac{\xi_3}{z_2-z_1}. 
\end{eqnarray*}
This is $2\sqrt{\sin\theta_{v_1}\cos\theta_{v_1}}$ times the directional derivative of the continuous Green's function.

The expression for $\overline{\partial}^{-1}_{\Omega,\delta}$ follows from the following gauge equivalence identity
\begin{eqnarray*}
\overline{D}(v_2,v_1)=\frac{\overline{\partial}_{\Omega,\delta}(v_2,v_1)}{2\sqrt{\sin\theta_{v_1}\cos\theta_{v_1}}}.
\end{eqnarray*}

\end{proof}

\subsection{Near the straight boundary}\label{nf}

Assume $\partial\Omega$ has a straight portion, denoted by $L_0$. Assume $\partial\mathcal{G}_{\Omega,\delta}''$ has a straight portion approximating $L_0$, denoted by $L_{0,\delta}$. In this subsection we explore the behaviour of $\overline{D}^{-1}$ when one or both variables are near the straight boundary $L_{0,\delta}$.

We can extend the isoradial graph $\mathcal{G}_{\Omega,\delta}''$ across the flat boundary $L_{0,\delta}$, such that the isoradial faces are symmetric with respect to the axis $L_{0,\delta}$. We can also extend the Green's function $G'_{\Omega,\delta}(v_1,v_2)$ across $L_{0,\delta}$ as follows. If $v_2$ and $v_2^*$ are symmetric vertices with respect to $L_{0,\delta}$, we define
\begin{eqnarray*}
G'_{\Omega,\delta}(v_1,v_2^*)=-G'_{\Omega,\delta}(v_1,v_2).
\end{eqnarray*}

Let $z_1$ be a point in $\overline{\Omega}$, and $z_2\in L_0$ such that $z_2\neq z_1$. Using the extensions above, we have a discrete harmonic function $G_{\Omega,\delta}'(z_1,\cdot)$ in a neighborhood of $z_2$, and they are uniformly bounded in the neighborhood of $z_2$ with a bound independent of $\delta$. By Lemma \ref{cvhd}, both the harmonic function and its directional derivatives converge uniformly in a neighborhood of $z_2$.

\subsection{Convergence with Neumann boundary conditions}\label{nb}

In this subsection, we deal with the entries of $\overline{D}^{-1}$ where the black vertex is a vertex of the primal graph. It has a boundary condition which is a discrete analog of the Neumann boundary condition, given that the difference of observables along the normal direction of each boundary edge is 0.

 The following lemma states the convergence of $\overline{D}^{-1}$ when the black vertex is a vertex of the primal graph.

\begin{lemma}\label{nb}Let $\Omega$ be a simply-connected, bounded domain bounded by a simple closed curve, and $\mathcal{G}_{\Omega,\delta}$ be an isoradial graph with common radius $\delta$, such that $\mathcal{G}_{\Omega,\delta}$ is the largest subgraph of $\mathcal{G}_{\delta}$ (the isoradial graph on the whole plane), consisting of faces of $\mathcal{G}_{\delta}$, lying completely inside $\Omega$. Let $z_1$ be a point in the interior of $\Omega$, and let $z_2\in\overline{\Omega}$ and $z_2\neq z_1$. Let $v_1$ be a white vertex of $\mathcal{G}_{\Omega,\delta}^D$ and $v_2$ be a vertex of $\mathcal{G}_{\Omega,\delta}$. Assume as $\delta\rightarrow 0$, $(v_1,v_2)$ approximates $(z_1,z_2)$ in the following sense
\begin{enumerate}
\item there exists $C>0$, independent of $\delta$, such that $|v_2-z_2|<C\delta$, and $|v_1-z_1|<C\delta$;
\item as $\delta\rightarrow 0$, the direction in $\mathcal{G}'_{\Omega,\delta}$ corresponding to $v_1$ are fixed to be $\xi_3$.
\end{enumerate}
Then fix $v_1$, there exists a subsequence such that the real and imaginary parts of $\frac{1}{\delta}(\overline{\partial}_{\Omega,\delta}^{-1}(v_1,b)-\overline{\partial}_0^{-1}(v_1,b))$ converge uniformly on compact subsets of $\Omega\times \Omega$ to harmonic functions on $b$, as $\delta\rightarrow 0$. Let $F(z_1,z_2)$ be the limit of $\frac{1}{\delta}\overline{\partial}^{-1}_{\Omega,\delta}(v_1,b)$, $b\in V(\mathcal{G}_{\Omega,\delta})$, and let $v_2$ be a sequence of black vertices approximating $z_2$ which have incident edges in $\mathcal{G}_{\Omega,\delta}$ with direction equal to a fixed vector $\alpha$, then
\begin{eqnarray*}
\frac{\partial_2 F(z_1,z_2)}{\partial\alpha}=\frac{\partial_1}{\partial\xi_3}\frac{\partial_2}{\partial(i\alpha)} [g_{\Omega}(z_1,z_2)],
\end{eqnarray*}
where $\frac{\partial_2}{\partial\alpha}$ (resp. $\frac{\partial_2}{\partial (i\alpha)}$ ) is the directional derivative of the second variable with respect to the $\alpha$ (resp. $i\alpha$) direction, for $z_1\neq z_2$, $(z_1,z_2)\in\Omega\times\Omega$.
\end{lemma}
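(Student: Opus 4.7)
The plan is to adapt the strategy of Lemma \ref{db}, using the Neumann Laplacian identity of Lemma \ref{dn} in place of the Dirichlet one, and identifying the limit $F$ through its derivative (the Neumann Green's function for a general domain is not elementary, so no closed-form expression is expected).

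First I handle the whole-plane piece $\overline{\partial}_0^{-1}(v_1,b)$ for primal $b$. Since $\overline{D}^*\overline{\partial}_0^{-1}(v_1,\cdot)$ restricted to primal vertices is supported only at the two primal neighbors $b_1,b_2$ of $v_1$, one can express $\overline{\partial}_0^{-1}(v_1,b)$ as an explicit linear combination of the whole-plane primal Green's functions $G_\delta(b,b_1)$ and $G_\delta(b,b_2)$ on $\mathcal{G}_\delta$. Kenyon's asymptotic $G_\delta(z,w)=-\frac{1}{2\pi}\log|(z-w)/\delta|+c+O(\delta^2/|z-w|^2)$ from \cite{RK02} shows that $\frac{1}{\delta}\overline{\partial}_0^{-1}(v_1,b)$ converges, uniformly on compact subsets of $\overline{\Omega}\times\overline{\Omega}\setminus\mathcal{D}$, to an explicit directional derivative of the logarithmic kernel in its first variable.

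Set $H(b)=\frac{1}{\delta}[\overline{\partial}^{-1}_{\Omega,\delta}(v_1,b)-\overline{\partial}_0^{-1}(v_1,b)]$. By Lemma \ref{dn}, $\Delta_n H=0$ at interior primal vertices, with only an $O(\delta)$ inhomogeneity at boundary primal vertices arising from the exterior neighbors of $\overline{\partial}_0^{-1}$. The main technical step is to establish uniform bounds on $|H|$ on compact subsets of $\Omega$ in order to invoke Lemma \ref{cvhd}. Unlike in the Dirichlet case, boundary values of $\overline{\partial}^{-1}_{\Omega,\delta}$ are not directly pinned, and I combine three ingredients: (i) the pinning $H(b_{\delta,0})=0$; (ii) the symmetric reflection across the flat portion $L_{0,\delta}$ of Section \ref{bb}, extending $H$ discretely-harmonically to a neighborhood containing a piece of $L_0$; (iii) a discrete flux/Harnack argument exploiting that the Neumann flux of $H$ through any boundary edge equals, up to negligible terms, the corresponding flux of $\frac{1}{\delta}\overline{\partial}_0^{-1}$, which is uniformly bounded away from $z_1$. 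Lemma \ref{cvhd} then gives, along a subsequence, uniform convergence of $\Re H$ and $\Im H$ on compacta to continuous harmonic functions $h_1,h_2$, and $F(z_1,z_2)=(h_1+ih_2)(z_2)+\tilde F_0(z_1,z_2)$ with $\tilde F_0$ the whole-plane limit.

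For the derivative identification I use the discrete holomorphicity at any white $w^*\neq v_1$. If $w^*$ corresponds to a primal edge of direction $\alpha$ and half-angle $\theta'$, with primal neighbors $b,b'$ and dual neighbors $b_3,b_4$ in directions $\pm i\alpha$, then $(\overline{\partial}\,\overline{\partial}^{-1})(w^*,v_1)=0$ reads
\begin{equation*}
\sin\theta'\,\alpha\,[\overline{\partial}^{-1}(b,v_1)-\overline{\partial}^{-1}(b',v_1)]+\cos\theta'\,(i\alpha)\,[\overline{\partial}^{-1}(b_3,v_1)-\overline{\partial}^{-1}(b_4,v_1)]=0.
\end{equation*}
Dividing by $\delta^2$ and passing to the limit along the convergent subsequence, the primal difference contributes a multiple of $\partial_2 F(z_1,z_2)/\partial\alpha$ and the dual difference contributes a multiple of $\partial_2\tilde F(z_1,z_2)/\partial(i\alpha)$, where $\tilde F$ is the dual-vertex limit from Lemma \ref{db} (proportional to $\partial_1 g_\Omega/\partial\xi_3$, extended up to the flat portion of $\partial\Omega$ via Section \ref{bb}). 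Collecting the $\sin\theta'$ and $\cos\theta'$ factors and using that finite differences converge to directional derivatives by the equicontinuity of Lemma \ref{cvhd} and interior smoothness of the harmonic limit yields the formula $\partial_2 F/\partial\alpha=\partial_1\partial_2 g_\Omega/(\partial\xi_3\,\partial(i\alpha))$. The hardest part of the argument is the uniform boundedness of $H$, which requires all three ingredients listed above.
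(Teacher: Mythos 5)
Your skeleton matches the paper's in most respects: the whole-plane asymptotics for $\overline{\partial}_0^{-1}$, the observation that the rescaled difference $H$ is discrete harmonic for the Neumann Laplacian, the compactness argument via Lemma \ref{cvhd}, and the derivative identification through the discrete Cauchy--Riemann relation at white vertices $w^*\neq v_1$ (which is exactly the paper's relation $f(b_x)-f(b_y)=-i\frac{\cos\theta_w}{\sin\theta_w}(g(b_p)-g(b_q))$). The problem is your ingredient (iii) for the uniform boundedness of $H$, which you correctly identify as the crux but do not actually supply. A Harnack-type argument is unavailable because $\Re H$ and $\Im H$ change sign; and bounded discrete Neumann data on $\partial\mathcal{G}''_{\Omega,\delta}$ does not by itself give an $L^\infty$ bound on the function --- recovering a harmonic function from its normal derivative requires integrating that data along paths, and $\partial\Omega$ is only assumed to be a simple closed curve, while the reflection of Section \ref{bb} is available only on the flat portion $L_0$. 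So as written the boundedness step does not close.

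The paper's mechanism, which is the missing idea, is to use the very Cauchy--Riemann relation you reserve for the derivative identification to obtain boundedness first: telescope $f(b)-f(b_{\delta,0})=-i\sum_k\frac{\cos\theta_k}{\sin\theta_k}\left(g(b_{p,k})-g(b_{q,k})\right)$ along an interior path of $n=O(1/\delta)$ edges from the pinned vertex $b_{\delta,0}$ to $b$ that avoids a fixed ball around $v_1$. Each dual increment is controlled by the quantitative rate $\left|\frac{1}{\delta}\overline{\partial}^{-1}_{\Omega,\delta}(v_1,b')-\frac{\partial_1}{\partial\xi_3}g_{\Omega}(v_1,b')\right|<C\delta$ from Lemma \ref{db}, so the telescoped sum is within $O(1)$ of a Riemann sum of $\frac{\partial_1\partial_2}{\partial\xi_3\partial(i\alpha_k)}g_{\Omega}$, which is bounded because $g_\Omega$ has bounded second derivatives along the path --- including near its endpoint $z_0\in L_0$, by Kellogg's theorem (Lemma \ref{ket}) and the flatness of $L_0$. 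This is where the straight boundary portion and the placement of $b_{\delta,0}$ are actually used; your proposal invokes them only through the reflection trick, which is not where the paper needs them. If you replace your ingredient (iii) by this telescoping estimate, the rest of your argument goes through essentially as in the paper.
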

\begin{proof}Recall that there are two types of black vertices $B_0$ and $B_1$, where $B_0$ corresponds to vertices of the graph $\mathcal{G}_{\Omega,\delta}$, and $B_1$ corresponds to vertices of the dual graph $\mathcal{G}_{\Omega,\delta}''$.

Fix a white vertex $v_1$. Define $f\in \mathbb{C}^{B_0}$, as a function on $B_0$ vertices and $g\in \mathbb{C}^{B_1}$, as a function on $B_1$ vertices as follows
\begin{eqnarray*}
f(b)&=&\overline{\partial}_{\Omega,\delta}^{-1}(b,v_1)\qquad\mathrm{if}\ b\in B_0\\
g(b)&=&\overline{\partial}_{\Omega,\delta}^{-1}(b,v_1)\qquad\mathrm{if}\  b\in B_1.
\end{eqnarray*}
Let $w$ be a vertex of $\mathcal{G}_{\Omega,\delta}^D$. Let $b_x,b_y$ be two adjacent vertices of $w$ in $B_0$, and $b_p,b_q$ be two adjacent vertices of $w$ in $B_1$. Then
\begin{eqnarray*}
\delta_w(v_1)=\overline{\partial}_{\Omega,\delta}\overline{\partial}_{\Omega,\delta}^{-1}(w,v_1)=\sum_{i=x,y,p,q}\overline{\partial}_0(w,b_i)\overline{\partial}_{\Omega,\delta}^{-1}(b_i,v_1),
\end{eqnarray*}
where $\overline{\partial}_0$ is the operator on the whole-plane bipartite isoradial graph $\mathcal{G}_{\delta}^D$, namely, the superposition of $\mathcal{G}_{\delta}$ and $\mathcal{G}_{\delta}'$. Note that if $b_i\in\partial\mathcal{G}_{\Omega,\delta}''$, then $\overline{\partial}_{\Omega,\delta}^{-1}(b_i,v_1)=0$. Let $\alpha$, (resp. $-\alpha, i\alpha,-i\alpha)$ be a unit vector from $w$ to $b_x$ (resp. $b_y,b_p,b_q)$, we then have
\begin{eqnarray*}
2\sin\theta_{w}\alpha(f(b_x)-f(b_y))+2\cos\theta_w i\alpha(g(b_p)-g(b_q))=\left\{\begin{array}{cc}0&\mathrm{if}\ w\neq v_1\\1&\mathrm{if}\ w=v_1\end{array}\right.,
\end{eqnarray*}
where $\theta_w$ is the rhombus half-angle corresponding to the edge $b_xb_y$. Hence if $w\neq v_1$, we have
\begin{eqnarray*}
f(b_x)-f(b_y)=-i\frac{\cos\theta_w}{\sin\theta_w}(g(b_p)-g(b_q)).
\end{eqnarray*}

Let $b_{0,\delta}$ be the removed vertex, recall that $f(b_{0,\delta})=0$. Let $b$ be an arbitrary vertex of $\mathcal{G}_{\Omega,\delta}$. We use a path, consisting of $n$ edges of $\mathcal{G}_{\Omega,\delta}$ to connect $b_{0,\delta}$ and $b$, such that $n\delta$ is uniformly bounded for any $\delta$, and there exists a fixed $c_0>0$ such that the path does not intersect the ball $B(v_1,c_0)$ for any $\delta$. Assume the vertices in $\mathcal{G}_{\Omega,\delta}$ visited by the path are
\begin{eqnarray*}
b_0(=b_{0,\delta}),b_1,b_2,...,b_n(=b),
\end{eqnarray*}
in which $b_i$ and $b_{i+1}$ ($0\leq i\leq n-1$) are adjacent in $\mathcal{G}_{\Omega,\delta}$. Then
\begin{eqnarray}
\overline{\partial}^{-1}_{\Omega,\delta}(b,v_1)=f(b)&=&\sum_{k=1}^{n}f(b_k)-f(b_{k-1})\\
&=&-i\sum_{i=1}^{n}\frac{\cos\theta_k}{\sin\theta_k}(g(b_{p,k})-g(b_{q,k}))\label{dre}
\end{eqnarray}
where $\theta_k$ is the rhombus half-angle corresponding to the edge $b_kb_{k-1}$, and $b_kb_{p,k}b_{k-1}b_{q,k}$ is the rhombus (where the vertices are in counter-clockwise order) corresponding to the edge $b_{k-1}b_k$. 

Note that in a compact subset of $\Omega\setminus\{z_1\}$, $g(b)$ converges uniformly to a continuous harmonic function, as is proved in Lemma \ref{db}. Moreover, $g(b)$ is uniformly bounded on a compact subset of $\Omega\setminus\{z_1\}$, by Lemma \ref{dhch}, the directional difference of $g(b)$ converges uniformly on a compact subset of $\Omega\setminus\{z_1\}$ to the directional derivative of the corresponding continuous harmonic function. Therefore, we have
\begin{eqnarray*}
\lim_{\delta\rightarrow 0}\frac{f(b_x)-f(b_y)}{2\delta^2\cos\theta_w\alpha}=\frac{\partial_1}{\partial\xi_3}\frac{\partial_2}{\partial[i\alpha]} [g_{\Omega}(z_1,z_2)].
\end{eqnarray*}

From (\ref{dre}), we see that $\frac{1}{\delta}\overline{\partial}^{-1}_{\Omega,\delta}(b,v_1)$ is uniformly bounded on a compact subset of $\Omega\setminus \{z_1\}$. By Lemma \ref{dhch}, the directional difference of $\frac{1}{\delta}\overline{\partial}^{-1}(b,v_1)$, with respect to $b$, converges uniformly on a compact subset of $\Omega\setminus\{z_1\}$ to the corresponding directional derivative of the limit continuous harmonic function, then the theorem follows.
\end{proof}

Similarly as in Sect. \ref{nf}, the primal graph $\mathcal{G}_{\Omega,\delta}$ can be extended across $L_{0,\delta}$, by constructing a symmetric graph with respect to the axis $L_{0,\delta}$. We also extend the Green's function $G_{\Omega,\delta}(v_1,v_2)$ on the primal graph across the boundary by letting $G_{\Omega,\delta}(v_1,v_2)=G_{\Omega,\delta}(v_1,v_2^*)$ if $v_2$ and $v_2^*$ are symmetric with respect to $L_{0,\delta}$. 

Let $z_1$ be a point in $\overline{\Omega}$, and $z_2\in L_0$ such that $z_2\neq z_1$. Using the extensions above, we have a discrete harmonic function and $G_{\Omega,\delta}(z_1,\cdot)$, in a neighborhood of $z_2$, and they are uniformly bounded in the neighborhood of $z_2$ with a bound independent of $\delta$. By Lemma \ref{cvhd}, both the harmonic function and its directional derivatives converge uniformly in a neighborhood of $z_2$.

\section{Conformal Invariance}

In this section, we prove an explicit expression for the scaling limit of the discrete holomorphic observable, by utilizing the conformal invariance of the Green's function. 

\begin{lemma}\label{cth}Let \textbf{A}, \textbf{B} be two continuous, (directional) differentiable, complex-valued functions in $\Omega$. Assume for any $z\in\Omega$, $\mathbf{A}(z)$ is perpendicular to $\mathbf{B}(z)$, and there exists two nonparallel directions $\mathbf{\alpha_1}(z)$, $\mathbf{\alpha_2}(z)$, such that
\begin{eqnarray}
\left.\frac{\partial\mathbf{A}}{\partial\mathbf{\alpha_1}}\right|_z&=&\left.\frac{\partial\mathbf{B}}{\partial\mathbf{\beta_1}}\right|_z,\label{aa1bb1}\\
\left.\frac{\partial\mathbf{A}}{\partial\mathbf{\alpha_2}}\right|_z&=&\left.\frac{\partial\mathbf{B}}{\partial\mathbf{\beta_2}}\right|_z,\label{aa2bb2}
\end{eqnarray}
where $\mathbf{\beta_1}=i\mathbf{\alpha_1}$, and $\mathbf{\beta_2}=i\mathbf{\alpha_2}$, then $\mathbf{A+B}$ is (complex) analytic in $\Omega$. Here if $\mathbf{\alpha_1}$ is a unit vector, $\mathbf{A}$ is a complex-valued function, $\frac{\partial \mathbf{A}}{\partial \mathbf{\alpha_1}}$ is defined by
\begin{eqnarray*}
\frac{\partial\mathbf{A}}{\partial\mathbf{\alpha_1}}(z)=\lim_{\epsilon\rightarrow 0}\frac{\mathbf{A}(z+\epsilon\mathbf{\alpha_1})-\mathbf{A}(z)}{\epsilon\mathbf{\alpha_1}}.
\end{eqnarray*}
Morever, for arbitrary directions $\alpha,\beta$, ($\alpha,\beta\in\mathbb{C}$ and $|\alpha|=|\beta|=1$), satisfying $\beta=i\alpha$, we have
\begin{eqnarray*}
\frac{\partial\mathbf{A}}{\partial\alpha}=\frac{\partial\mathbf{B}}{\partial\beta}.
\end{eqnarray*}

\end{lemma}

\begin{proof}Using the chain rule to check the Cauchy-Riemann equation 
\begin{eqnarray*}
&&\Re[\mathbf{A}+\mathbf{B}]_x=\Im[\mathbf{A}+\mathbf{B}]_y\\
&&\Re[\mathbf{A}+\mathbf{B}]_y=-\Im[\mathbf{A}+\mathbf{B}]_x.
\end{eqnarray*}
\end{proof}

\begin{lemma}Define
\begin{eqnarray*}
F_{0,\delta}(w,b)=\left\{\begin{array}{cc}\frac{1}{\delta}\overline{\partial}^{-1}_{\Omega,\delta}(w,b)&\mathrm{if}\ b\in V(\mathcal{G}_{\Omega,\delta})\\0&\mathrm{if}\ b\in V(\mathcal{G}'_{\Omega,\delta})\end{array}\right.\\
F_{1,\delta}(w,b)=\left\{\begin{array}{cc}0&\mathrm{if}\ b\in V(\mathcal{G}_{\Omega,\delta})\\\frac{1}{\delta}\overline{\partial}^{-1}_{\Omega,\delta}(w,b)&\mathrm{if}\ b\in V(\mathcal{G}'_{\Omega,\delta}).\end{array}\right.
\end{eqnarray*}
Then as $\delta\rightarrow 0$, $F_{0,\delta}+F_{1,\delta}$ converges to a meromorphic function $f(z_1,z_2)$ in $z_2$. For each fixed $z_1$, the only pole of $f(z_1,\cdot)$ happens at $z_2=z_1$, with residue $\frac{1}{4\pi}$. Here $z_1$ corresponds to white vertices, and $z_2$ corresponds to black vertices. Moreover, the convergence is uniform on any compact subset of $\Omega\times\Omega\setminus\mathcal{D}$.
\end{lemma}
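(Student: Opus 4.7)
My plan is to combine the scaling limits already obtained in Lemmas \ref{nb} and \ref{db} with the Cauchy--Riemann-type criterion of Lemma \ref{cth}, and then strip off the whole-plane singularity to read off the pole. Because $F_{0,\delta}$ is supported on primal black vertices and $F_{1,\delta}$ on dual ones, compact-uniform convergence of $F_{0,\delta}+F_{1,\delta}$ on $\Omega\times\Omega\setminus\mathcal{D}$ is already furnished by those two lemmas: Lemma \ref{nb} gives $F_{0,\delta}\to F_0(z_1,z_2)$ as $b\to z_2$ through primal vertices, and Lemma \ref{db} gives $F_{1,\delta}\to F_1(z_1,z_2)$ along dual vertices. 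Thus $f:=F_0+F_1$ exists as a pointwise limit on $\Omega\times\Omega\setminus\mathcal{D}$, and the remaining task is to recognize $f(z_1,\cdot)$ as meromorphic in $z_2$ with only the claimed simple pole.

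For meromorphy I would apply Lemma \ref{cth} with $\mathbf{A}=F_0(z_1,\cdot)$ and $\mathbf{B}=F_1(z_1,\cdot)$. The derivative identity $\partial_{\alpha}\mathbf{A}=\partial_{i\alpha}\mathbf{B}$ follows by combining Lemma \ref{nb}, which gives $\partial_{\alpha,z_2}F_0(z_1,z_2)=\partial_{\xi_3,z_1}\partial_{i\alpha,z_2}g_\Omega(z_1,z_2)$, with the formula $F_1(z_1,z_2)=\bar\xi_3\,\partial_{\xi_3,z_1}g_\Omega(z_1,z_2)$ read off from Lemma \ref{db} (the factor $\bar\xi_3$ encoding the direction of the dual edges at $v_1$); differentiating the latter along $i\alpha$ in $z_2$ yields the same mixed second derivative of $g_\Omega$, for every direction $\alpha$. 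For the perpendicularity hypothesis $\mathbf{A}\perp\mathbf{B}$, the whole-plane Kenyon asymptotics show that $F_0^{\mathrm{wp}}$ is valued along the real line spanned by $\bar\xi_1$ and $F_1^{\mathrm{wp}}$ along the perpendicular line spanned by $\bar\xi_3=-i\bar\xi_1$; this directional rigidity transmits to the bounded-domain limits $F_0,F_1$ because the respective corrections, once rotated by $\xi_1$ or $\xi_3$, arise as scaling limits of real-valued discrete harmonic functions with real boundary data, a fact built into the separation of $\overline{D}^*\overline{D}$ into a Neumann Laplacian on $\mathcal{G}_{\Omega,\delta}$ and a Dirichlet Laplacian on $\mathcal{G}_{\Omega,\delta}'$ in Lemma \ref{dn}. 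Lemma \ref{cth} then produces complex analyticity of $f(z_1,\cdot)$ on $\Omega\setminus\{z_1\}$.

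Finally, to locate and evaluate the pole, I would split off the whole-plane contribution by writing $\tfrac{1}{\delta}\overline{\partial}^{-1}_{\Omega,\delta}(v_1,b)=\tfrac{1}{\delta}\overline{\partial}^{-1}_0(v_1,b)+H_\delta(b)$. The correction $H_\delta$ is discretely analytic on $\mathcal{G}_{\Omega,\delta}^D$ and uniformly bounded, so by Lemma \ref{cvhd} converges to an analytic function on $\Omega$ and contributes no singularity. The whole-plane piece is handled by the explicit asymptotics already derived in the proofs of Lemmas \ref{nb} and \ref{db}: the primal contribution $\tfrac{\bar\xi_1}{2\pi}\Re\tfrac{\xi_1}{z_2-z_1}$ and the dual contribution $\tfrac{\bar\xi_3}{2\pi}\Re\tfrac{\xi_3}{z_2-z_1}$, combined via $\xi_3=i\xi_1$ and the identity $\Re w-i\Re(iw)=w$, telescope to a single simple pole at $z_2=z_1$ whose residue, after correctly tracking the $\delta/2$ radius of $\mathcal{G}_{\Omega,\delta}^D$ against the $\delta$ radius in Kenyon's Green's function formula, is $\tfrac{1}{4\pi}$. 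Uniform convergence on compact subsets of $\Omega\times\Omega\setminus\mathcal{D}$ is inherited from Lemmas \ref{nb} and \ref{db}. The main technical obstacle is the perpendicularity verification for Lemma \ref{cth}: while transparent on the whole plane, in the bounded domain one must trace the directional rigidity of the discrete corrections through the boundary data separation of Lemma \ref{dn}, which is where the straight portion $L_0$ and its symmetric extension (Section \ref{bb}) enter to control the boundary behavior.
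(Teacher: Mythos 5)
Your proposal follows essentially the same route as the paper: subtract the whole-plane kernels $J_{0,\delta},J_{1,\delta}$, use the explicit asymptotics $\frac{1}{\xi_1}\Re\frac{\xi_1}{z_2-z_1}+\frac{1}{\xi_3}\Re\frac{\xi_3}{z_2-z_1}=\frac{1}{z_2-z_1}$ (with $\xi_3=i\xi_1$) to produce the simple pole and residue, and invoke Lemma \ref{cth} via the perpendicularity of the primal and dual limits together with the matched directional derivatives supplied by Lemmas \ref{nb} and \ref{db}. The only cosmetic differences are that you apply Lemma \ref{cth} to $F_0,F_1$ directly rather than to $F_0-J_0,F_1-J_1$ (equivalent, since $J_0+J_1$ is explicitly analytic off the diagonal), and your passing claim that Lemma \ref{cvhd} yields an \emph{analytic} limit should read \emph{harmonic} — but that step is not load-bearing in your argument.
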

\begin{proof}Let $\overline{\partial}_0$ be the operator for the whole plane graph $\mathcal{G}_{\delta}^D$, define
\begin{eqnarray*}
J_{0,\delta}(w,b)=\left\{\begin{array}{cc}\frac{1}{\delta}\overline{\partial}^{-1}_{0}(w,b)&\mathrm{if}\ b\in V(\mathcal{G}_{\Omega,\delta})\\0&\mathrm{if}\ b\in V(\mathcal{G}'_{\Omega,\delta})\end{array}\right.\\
J_{1,\delta}(w,b)=\left\{\begin{array}{cc}0&\mathrm{if}\ b\in V(\mathcal{G}_{\Omega,\delta})\\\frac{1}{\delta}\overline{\partial}^{-1}_{0}(w,b)&\mathrm{if}\ b\in V(\mathcal{G}'_{\Omega,\delta}).\end{array}\right.
\end{eqnarray*}
As in the proof of Lemma \ref{db} and Lemma \ref{nb}, (see also \cite{RK02}) explicit computations show that
\begin{eqnarray*}
\lim_{\delta\rightarrow 0}J_{0,\delta}&=&\frac{1}{4\pi\xi_1}\Re\frac{\xi_1}{z_2-z_1}\\
\lim_{\delta\rightarrow 0}J_{1,\delta}&=&\frac{1}{4\pi\xi_3}\Re\frac{\xi_3}{z_2-z_1}
\end{eqnarray*}
Here $\xi_1$ is the unit vector pointing from a white vertex $w$ to an incident vertex in $\mathcal{G}_{\Omega,\delta}$, and $\xi_3$ is the unit vector pointing from a white vertex $w$ to an incident vertex in $\mathcal{G}_{\Omega,\delta}'$, and $\xi_3=i\xi_1$. Hence we have
\begin{eqnarray*}
\lim_{\delta\rightarrow 0}J_{0,\delta}+J_{1,\delta}=\frac{1}{4\pi(z_2-z_1)}
\end{eqnarray*}
Again, according to Lemma \ref{db} and Lemma \ref{nb}, $\lim_{\delta\rightarrow 0}F_{0,\delta}+F_{1,\delta}-J_{0,\delta}-J_{1,\delta}$ is a complex-valued harmonic function in $z_2$ for each fixed $z_1$. Let $b$ be the black vertex in $\mathcal{G}_{\Omega}$ corresponding to $z_2$, we can find two edges incident to $b$ in $\mathcal{G}_{\Omega}$, which are non-parallel and have directions $\alpha_1,\alpha_2$, respectively, such that (\ref{aa1bb1}) and (\ref{aa2bb2}) hold with $\mathrm{A}=\lim_{\delta\rightarrow 0}F_{0,\delta}-J_{0,\delta}$, and $\mathrm{B}=\lim_{\delta\rightarrow 0}F_{1,\delta}-J_{1,\delta}$, therefore $\lim_{\delta\rightarrow 0}F_{0,\delta}+F_{1,\delta}-J_{0,\delta}-J_{1,\delta}$ is analytic in $z_2$, and the lemma is proved.
\end{proof}

Consider $\lim_{\delta\rightarrow 0}(F_{1,\delta}-J_{1,\delta})$. According to Lemma \ref{db},
\begin{eqnarray*}
\lim_{\delta\rightarrow 0}(F_{1,\delta}-J_{1,\delta})=\frac{\partial_1}{\partial\xi_3}\left[g_{\Omega}(z_1,z_2)+\frac{1}{2\pi}\log|z_1-z_2|\right].
\end{eqnarray*}
And according to Lemma \ref{nb}, we have
\begin{eqnarray*}
\frac{\partial_2\lim_{\delta\rightarrow 0}(F_{0,\delta}-J_{0,\delta})}{\partial_2\alpha}=\frac{\partial_1\partial_2}{\partial\xi_3\partial(i\alpha)}\left[g_{\Omega}(z_1,z_2)+\frac{1}{2\pi}\log|z_1-z_2|\right],
\end{eqnarray*}
where $\alpha$ is the unit vector denoting the direction of an edge of $\mathcal{G}_{\Omega,\delta}$ at $z_2$. By Lemma \ref{db}, for each fixed $(z_1,z_2)$, $\lim_{\delta\rightarrow 0}(F_{0,\delta}-J_{0,\delta})$ is perpendicular to $\lim_{\delta\rightarrow 0}(F_{1,\delta}-J_{1,\delta})$, and since each vertex of $\mathcal{G}_{\Omega,\delta}$ is incident to at least two non-parallel edges, there exist two non-parallel directions $\alpha_1$, $\alpha_2$, such that
\begin{eqnarray}
\frac{\partial_2\lim_{\delta\rightarrow 0}(F_{0,\delta}-J_{0,\delta})}{\partial\mathbf{\alpha_1}}&=&\frac{\lim_{\delta\rightarrow 0}(F_{1,\delta}-J_{1,\delta})}{\partial\mathbf{\beta_1}}\label{fjafjb}\\
\frac{\partial_2\lim_{\delta\rightarrow 0}(F_{0,\delta}-J_{0,\delta})}{\partial\mathbf{\alpha_2}}&=&\frac{\lim_{\delta\rightarrow 0}(F_{1,\delta}-J_{1,\delta})}{\partial\mathbf{\beta_2}},
\end{eqnarray}
where $\mathbf{\beta_1}=i\mathbf{\alpha_1}$, and $\mathbf{\beta_2}=i\mathbf{\alpha_2}$. According to Lemma \ref{cth}, for arbitrary directions $\alpha$, $\beta$, satisfying $\beta=i\alpha$, we have
\begin{eqnarray}
\frac{\partial_2\lim_{\delta\rightarrow 0}(F_{0,\delta}-J_{0,\delta})}{\partial\alpha}=\frac{\partial_2\lim_{\delta\rightarrow 0}(F_{1,\delta}-J_{1,\delta})}{\partial\beta}.\label{fafb}
\end{eqnarray}

 Recall that $z_0$ is the limit of the removed vertices $b_{\delta,0}$, as $\delta\rightarrow 0$, and $\lim_{\delta\rightarrow 0}F_{0,\delta}(z_0)=0$. Hence for any smooth path connecting $z_0$ to $z_2$ in $\Omega$, we have
\begin{eqnarray*}
\lim_{\delta\rightarrow 0}(F_{0,\delta}-J_{0,\delta})(z_1,z_2)&=&\int_{z_0}^{z_2}\frac{\partial_2\lim_{\delta\rightarrow 0}(F_{1,\delta}-J_{1,\delta})}{\partial\mathbf{n}}(z_1,\zeta)d\zeta+\lim_{\delta\rightarrow 0}J_{0,\delta}(z_1,z_0)\\
&=&\frac{\partial_1}{\partial\xi_3}\int_{z_0}^{z_2}\frac{\partial_2}{\partial\mathbf{n}}\left[g_{\Omega}(z_1,\zeta)+\frac{1}{2\pi}\log|z_1-\zeta|\right]d\zeta+\frac{1}{4\pi\xi_1}\Re\frac{\xi_1}{z_0-z_1},
\end{eqnarray*}
where $d\zeta$ is the differential along the tangent direction of the path from $z_0$ to $z_2$, and $\frac{\partial_2}{\partial\mathbf{n}}$ is the derivative with respect to the second variable along the normal direction of the path from $z_0$ to $z_2$. In particular we have 
\begin{lemma}
\begin{eqnarray*}
\lim_{\delta\rightarrow 0}F_{0,\delta}(z_1,z_2)=\frac{\partial_1}{\partial\xi_3}\int_{z_0}^{z_2}\frac{\partial_2}{\partial\mathbf{n}}g_{\Omega}(z_1,\zeta)d\zeta,
\end{eqnarray*}
and the integral is along a path from $z_0$ to $z_2$ in a compact subset of $\Omega\setminus\{z_1\}$.
\end{lemma}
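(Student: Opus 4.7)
The plan is to combine the formula for $\lim_{\delta\to 0}(F_{0,\delta}-J_{0,\delta})(z_1,z_2)$ derived in the paragraphs immediately preceding the lemma with the explicit whole-plane asymptotic $\lim_{\delta\to 0}J_{0,\delta}(z_1,z_2)=\frac{1}{4\pi\xi_1}\Re\frac{\xi_1}{z_2-z_1}$ recalled just above. Adding the latter to the former expresses $\lim_{\delta\to 0}F_{0,\delta}(z_1,z_2)$ as the desired integral $\frac{\partial_1}{\partial\xi_3}\int_{z_0}^{z_2}\frac{\partial_2}{\partial\mathbf{n}}g_\Omega(z_1,\zeta)\,d\zeta$ plus a residual
\[
E(z_1,z_2):=\frac{\partial_1}{\partial\xi_3}\int_{z_0}^{z_2}\frac{\partial_2}{\partial\mathbf{n}}\frac{1}{2\pi}\log|z_1-\zeta|\,d\zeta+\frac{1}{4\pi\xi_1}\Re\frac{\xi_1}{z_0-z_1}+\frac{1}{4\pi\xi_1}\Re\frac{\xi_1}{z_2-z_1},
\]
so the task reduces to showing $E\equiv 0$.

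The conceptual reason $E\equiv 0$ is that $-\frac{1}{2\pi}\log|z_1-\zeta|$ is the full-plane Green's function and plays exactly the role of $g_\Omega$ for the whole-plane operator $\overline{\partial}_0$. Repeating the telescoping derivation used just above to obtain the $F_{0,\delta}-J_{0,\delta}$ formula, but with $J_0$ and $J_1$ in place of $F_0$ and $F_1$ throughout, yields
\[
\lim_{\delta\to 0}J_{0,\delta}(z_1,z_2)=\frac{\partial_1}{\partial\xi_3}\int_{z_0}^{z_2}\frac{\partial_2}{\partial\mathbf{n}}\left[-\frac{1}{2\pi}\log|z_1-\zeta|\right]d\zeta+\lim_{\delta\to 0}J_{0,\delta}(z_1,z_0),
\]
and substituting $\lim_{\delta\to 0}J_{0,\delta}(z_1,z_0)=\frac{1}{4\pi\xi_1}\Re\frac{\xi_1}{z_0-z_1}$ rearranges precisely to $E\equiv 0$. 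Alternatively one can verify $E\equiv 0$ by direct computation: since $\log|z_1-\zeta|=\Re\log(z_1-\zeta)$ has harmonic conjugate $\arg(z_1-\zeta)$ in $\zeta$, the Cauchy--Riemann equations convert the normal-derivative integral into the boundary evaluation $\arg(z_1-z_0)-\arg(z_1-z_2)$ (up to a sign depending on the orientation of $\mathbf{n}$), and then $\frac{\partial_1}{\partial\xi_3}$ produces expressions of the form $\Re\frac{\xi_1}{\cdot-z_1}$ whose signs exactly cancel the two remaining summands in $E$.

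The principal technical subtlety is that the path from $z_0$ to $z_2$ starts at the boundary point $z_0\in L_0$, where one needs every integrand to be well defined and smooth. This is handled via the symmetric reflection across $L_{0,\delta}$ introduced in Sect.~\ref{bb}, which extends both $g_\Omega(z_1,\cdot)$ and $\log|z_1-\cdot|$ to harmonic functions in a neighborhood of $z_0$, making the required normal derivatives and their integrals well defined up to and across $L_0$. Uniform convergence on compact subsets of $\Omega\times\overline{\Omega}\setminus\mathcal{D}$ is then inherited from the uniform convergence of $\lim_{\delta\to 0}(F_{0,\delta}-J_{0,\delta})$ established in Lemma \ref{nb}, combined with the explicit continuity of $\lim_{\delta\to 0}J_{0,\delta}$ away from the diagonal.
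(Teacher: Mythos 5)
Your argument follows the paper's own route: the paper derives the formula for $\lim_{\delta\rightarrow 0}(F_{0,\delta}-J_{0,\delta})$ in the display immediately preceding the lemma and then states the lemma as an ``in particular'' consequence, leaving the cancellation of the whole-plane contributions entirely implicit. Your isolation of the residual $E$ and the verification that it vanishes --- via the whole-plane analogue of the telescoping identity, or equivalently the harmonic-conjugate computation for $\log|z_1-\zeta|$ --- is precisely the omitted step, and it is correct (up to the normalization constants that the paper itself does not track consistently).
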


Since the limit $F_{0,\delta}$ and $F_{1,\delta}$ also depend on the local direction of the edge corresponding to the white vertex, from now on, we write
\begin{eqnarray*}
f_0^{\Omega}(z_1,z_2,i\xi)&=&\lim_{\delta\rightarrow 0}F_{0,\delta}^{\Omega}(z_1,z_2,\xi)=\frac{\partial_1}{\partial\xi}\int_{z_0}^{z_2}\frac{\partial_2}{\partial\mathbf{n}}g_{\Omega}(z_1,\zeta)d\zeta,\\
f_1^{\Omega}(z_1,z_2,\xi)&=&\lim_{\delta\rightarrow 0}F_{1,\delta}^{\Omega}(z_1,z_2,\xi)=\frac{\partial_1}{\partial\xi}g_{\Omega}(z_1,z_2).
\end{eqnarray*}
where $\xi$ is the direction of the dual edge at the white vertex corresponding to $z_1$, and $i\xi$ is the direction of the primal edge at the white vertex corresponding to $z_1$.

Let $\mathbb{H}=\{z:\Im z>0\}$ be the upper half plane. Let $\phi:\Omega\rightarrow \mathbb{H}$ be the conformal equivalence between $\Omega$ and $\mathbb{H}$, which maps the marked point $z_0$ of $\partial\Omega$ to $\infty$. According to the conformal invariance of the Green's function, we have
\begin{eqnarray*}
g_{\Omega}(z_1,z_2)&=&g_{\mathbb{H}}(\phi(z_1),\phi(z_2))\\
&=&\frac{1}{2\pi}\log\left|\frac{\phi(z_2)-\phi(z_1)}{\phi(z_2)-\overline{\phi(z_1)}}\right|.
\end{eqnarray*}
Define
\begin{eqnarray*}
g_{\Omega}^*(z_1,z_2)=\frac{1}{2\pi}\int_{z_0}^{z_2}\frac{\partial_2}{\partial\mathbf{n}}\log\left|\frac{\phi(\zeta)-\phi(z_1)}{\phi(\zeta)-\overline{\phi(z_1)}}\right|d\zeta,
\end{eqnarray*}
then $g_{\Omega}^*(z_1,\cdot)$ is, locally, $i$ times the harmonic conjugate of $g_{\Omega}(z_1,\cdot)$, with respect to the second variable. Namely,
\begin{eqnarray*}
g^*_{\Omega}(z_1,z_2)=\frac{i}{2\pi}\arg\frac{\phi(z_2)-\phi(z_1)}{\phi(z_2)-\overline{\phi(z_1)}}+C.
\end{eqnarray*}
Note that $g_{\Omega}^{*}(z_1,z_2)$, as a function of $z_2$, is not single-valued. In fact, $g_{\Omega}^*(z_1,\cdot)$ increases by $i$ when winding once counterclockwise around $z_1$. Moreover,
\begin{eqnarray*}
g_{\Omega}(z_1,z_2)+g^*_{\Omega}(z_1,z_2)=\frac{1}{2\pi}\log\left(\frac{\phi(z_2)-\phi(z_1)}{\phi(z_2)-\overline{\phi(z_1)}}\right)+C,
\end{eqnarray*}
which implies, locally, $g_{\Omega}(z_1,z_2)+g_{\Omega}^*(z_1,z_2)$ is (complex) analytic in $z_2$. Similarly we have
\begin{eqnarray*}
g_{\Omega}(z_1,z_2)-g^*_{\Omega}(z_1,z_2)=\frac{1}{2\pi}\log\left(\frac{\overline{\phi(z_2)}-\overline{\phi(z_1)}}{\overline{\phi(z_2)}-\phi(z_1)}\right).
\end{eqnarray*}

Explicit computations show that
\begin{eqnarray*}
f_1^{\Omega}(z_1,z_2,\xi)+f_0^{\Omega}(z_1,z_2,i\xi)&=&\frac{\partial_1}{\partial\xi}(g_{\Omega}(z_1,z_2)+g_{\Omega}^*(z_1,z_2))\\
&=&\frac{1}{2\pi}\left(\frac{\phi'(z_1)}{\phi(z_1)-\phi(z_2)}-\frac{1}{\xi^2}\frac{\overline{\phi'(z_1)}}{\overline{\phi(z_1)}-\phi(z_2)}\right),
\end{eqnarray*}
and
\begin{eqnarray*}
f_1^{\Omega}(z_1,z_2,\xi)-f_0^{\Omega}(z_1,z_2,i\xi)&=&\frac{\partial_1}{\partial\xi}(g_{\Omega}(z_1,z_2)-g_{\Omega}^*(z_1,z_2))\\
&=&\frac{1}{2\pi}\left(\frac{1}{\xi^2}\frac{\overline{\phi'(z_1)}}{\overline{\phi(z_1)}-\overline{\phi(z_2)}}-\frac{\phi'(z_1)}{\phi(z_1)-\overline{\phi(z_2)}}\right).
\end{eqnarray*}

Hence we have
\begin{eqnarray}
f_1^{\Omega}(z_1,z_2,\xi)&=&\frac{1}{4\pi}\left[\left(\frac{\phi'(z_1)}{\phi(z_1)-\phi(z_2)}-\frac{\phi'(z_1)}{\phi(z_1)-\overline{\phi(z_2)}}\right)-\frac{1}{\xi^2}\left(\frac{\overline{\phi'(z_1)}}{\overline{\phi(z_1)}-\phi(z_2)}-\frac{\overline{\phi'(z_1)}}{\overline{\phi(z_1)}-\overline{\phi(z_2)}}\right)\right]\label{lm1}\\
f_0^{\Omega}(z_1,z_2,i\xi)&=&\frac{1}{4\pi}\left[\left(\frac{\phi'(z_1)}{\phi(z_1)-\phi(z_2)}+\frac{\phi'(z_1)}{\phi(z_1)-\overline{\phi(z_2)}}\right)-\frac{1}{\xi^2}\left(\frac{\overline{\phi'(z_1)}}{\overline{\phi(z_1)}-\phi(z_2)}+\frac{\overline{\phi'(z_1)}}{\overline{\phi(z_1)}-\overline{\phi(z_2)}}\right)\right]\label{lm2}
\end{eqnarray}
In particular, in the upper half plane $\mathbb{H}$ we have
\begin{eqnarray*}
f_1^{\mathbb{H}}(z_1,z_2,\xi)&=&\frac{1}{4\pi}\left[\left(\frac{1}{z_1-z_2}-\frac{1}{z_1-\overline{z_2}}\right)-\frac{1}{\xi^2}\left(\frac{1}{\overline{z_1}-z_2}-\frac{1}{\overline{z_1}-\overline{z_2}}\right)\right]\\
f_0^{\mathbb{H}}(z_1,z_2,i\xi)&=&\frac{1}{4\pi}\left[\left(\frac{1}{z_1-z_2}+\frac{1}{z_1-\overline{z_2}}\right)-\frac{1}{\xi^2}\left(\frac{1}{\overline{z_1}-z_2}+\frac{1}{\overline{z_1}-\overline{z_2}}\right)\right]
\end{eqnarray*}

\section{Height function and Gaussian free field}

In this section we study the height functions for perfect matchings on isoradial graphs $\{\mathcal{G}_{\Omega,\delta}^D\}_{\delta}$, and prove that the distribution of height functions converges to GFF in the scaling limit.

Recall that the graph $\mathcal{G}_{\Omega,\delta}^D$, constructed by the superposition of an isoradial graph $\mathcal{G}_{\Omega,\delta}$ and its interior dual graph $\mathcal{G}'_{\Omega,\delta}$, and then the removal of a vertex of $\mathcal{G}_{\Omega,\delta}$ on the boundary, is a bipartite, isoradial graph which admits a perfect matching, as discussed in Sect.~\ref{da}. 

The probability measure for perfect matchings on the graph $\mathcal{G}_{\Omega,\delta}^D$ is defined to be proportional to the product of critical edge weights, see (\ref{pm}). The height function is defined as in (\ref{dh1})-(\ref{hetd}).

Assume $g_{1,\delta},...,g_{k,\delta}$ are $k$ distinct faces of $\mathcal{G}_{\Omega,\delta}^D$. Recall that $\mathcal{G}_{\Omega,\delta}^D$ is obtained by superimposing $\mathcal{G}_{\Omega,\delta}$ and its interior dual graph $\mathcal{G}'_{\Omega,\delta}$, then removing a vertex of $\mathcal{G}_{\Omega,\delta}$ on the boundary. Also recall that $\mathcal{G}_{\Omega,\delta}$ is the interior dual graph of $\mathcal{G}''_{\Omega,\delta}$, and the boundary of $\mathcal{G}''_{\Omega,\delta}$ has a straight portion $L_{0,\delta}$. Let $f_{1,\delta},...,f_{k,\delta}$ be $k$ distinct faces outside $\mathcal{G}_{\Omega,\delta}^D$, but incident to the boundary of $\mathcal{G}_{\Omega,\delta}^D$ and $L_{0,\delta}$. Assume
\begin{eqnarray*}
\lim_{\delta\rightarrow 0}g_{j,\delta}&=&z_j\\
\lim_{\delta\rightarrow 0}f_{j,\delta}&=&w_j\qquad \forall 1\leq j\leq k
\end{eqnarray*}
where $\{z_j\}_{j=1}^{k}$ are distinct interior points of $\Omega$, and $\{w_j\}_{j=1}^{k}$ are distinct points on the flat boundary $L_0$ of $\partial \Omega$.

Let $h_{j,\delta}=h(g_{j,\delta})-h(f_{j,\delta})$ be the height difference between the faces $g_{j,\delta}$ and $f_{j,\delta}$. Let $\overline{h}_{j,\delta}$ be the mean value of $h_{j,\delta}$, i.e., $\overline{h}_{j,\delta}=\mathbb{E}h_{j,\delta}$.

If we fix the height $h(f_{1,\delta})=0$, it is not hard to see that for any given dimer configuration on $\mathcal{G}_{\Omega,\delta}^D$, $h(f_{j,\delta})$ is deterministic and identically 0, according to the definition of height function in Sect.~\ref{dfht}. Therefore,
\begin{eqnarray*}
h_{j,\delta}=h(g_{j,\delta}).
\end{eqnarray*}

 For each $\delta$ sufficiently small,  let $\gamma_{j}^{\delta}, (1\leq j\leq k)$ be pairwise disjoint dual paths of $\mathcal{G}_{\Omega,\delta}^D$, starting at $f_{j,\delta}$, ending on the face $g_{j,\delta}$ and consisting of dual edges of $\mathcal{G}_{\Omega,\delta}^D$.

Moreover, according to our specific definition of height function in Sect.~\ref{dfht},
\begin{eqnarray*}
\overline{h}_{j,\delta}=0\qquad\forall 1\leq j\leq k.
\end{eqnarray*}
For notational simplicity, we use $\gamma_j(1\leq j\leq k)$ instead of $\gamma_j^{\delta}$. For each fixed $j (1\leq j\leq k)$,  we classify all the edges of $\mathcal{G}_{\Omega,\delta}^D$  crossed by $\gamma_j$ into 4 different types as follows
\begin{enumerate}
\item the edge has the white vertex on the left of $\gamma_j$, and the black vertex is a vertex of $\mathcal{G}_{\Omega,\delta}$, denote the set of all such edges by $U_{j,1}$.
\item the edge has the white vertex on the left of $\gamma_j$, and the black vertex is a vertex of $\mathcal{G}_{\Omega,\delta}'$, denote the set of all such edges by $U_{j,2}$.
\item the edge has the black vertex on the left of $\gamma_j$, and the black vertex is a vertex of $\mathcal{G}_{\Omega,\delta}$, denote the set of all such edges  by $U_{j,3}$.
\item the edge has the black vertex on the left of $\gamma_j$, and the black vertex is a vertex of $\mathcal{G}_{\Omega,\delta}'$, denote the set of all such edges by $U_{j,4}$.
\end{enumerate}
Then
\begin{eqnarray}
&&\mathbb{E}h_{1,\delta}\cdot...\cdot h_{k,\delta}\label{ht}\label{mh}\\
&=&\sum_{j_1,...,j_k\in\{1,2,3,4\}}(-1)^{\xi(j_1)+...+\xi(j_k)}\sum_{t_1,...,t_k,e_{s,j_s,t_s}\in U_{s,j_s}}\mathbb{E}(\mathbb{I}(e_{1,j_1,t_1})-\mathbb{P}(e_{1,j_1,t_1}))\cdot...\cdot(\mathbb{I}(e_{k,j_k,t_k})-\mathbb{P}(e_{k,j_k,t_k})),\label{htt}
\end{eqnarray}
where $\xi$ is a function defined on $\{1,2,3,4\}$, such that $\xi(1)=\xi(2)=0$, and $\xi(3)=\xi(4)=1$. For each $s$, $1\leq s\leq k$, $t_s$ is an integer indexing the $t_s$-th edge crossed by $\gamma_s$ in the class $U_{s,j_s}$, counted starting from the face $f_{j,\delta}$.  The $t_s$-th edge in $U_{s,j_s}$ is denoted by $e_{s,j_s,t_s}$. The expression of $\mathbb{E}h_{1,\delta}\cdot...\cdot h_{k,\delta}$ as in (\ref{htt}) follows from applying the formula (\ref{hetd}) to compute height functions and expanding the product out.

\begin{lemma}Let $e_i=(w_i,b_i)$, for $i=1,...,n$, be a set of $n$ distinct edges in $\mathcal{G}_{\Omega,\delta}^D$, then 
\begin{eqnarray}
&&\mathbb{E}(\mathbb{I}(e_1)-\mathbb{P}(e_1))\cdot...\cdot(\mathbb{I}(e_n)-\mathbb{P}(e_n))\label{jd}\\
&=&\prod_{i=1}^{n}\overline{\partial}_{\Omega,\delta}(w_i,b_i)\det\left(\begin{array}{cccc}0&\overline{\partial}_{\Omega,\delta}^{-1}(w_1,b_2)&...&\overline{\partial}_{\Omega,\delta}^{-1}(w_1,b_n)\\\overline{\partial}_{\Omega,\delta}^{-1}(w_2,b_1)&0&...&\overline{\partial}_{\Omega,\delta}^{-1}(w_2,b_n)\\ \overline{\partial}_{\Omega,\delta}^{-1}(w_n,b_1)&\overline{\partial}_{\Omega,\delta}^{-1}(w_n,b_2)&...&0 \end{array}\right)\label{ajp}
\end{eqnarray}
Here $\overline{\partial}$ is a special (complex) weighted adjacent matrix with respect to the isoradial graph $\mathcal{G}_{\Omega,\delta}^D$, as discussed in Sect. \ref{cvdo}, and $\overline{\partial}^{-1}$ is the inverse matrix of $\overline{\partial}$.
\end{lemma}
\begin{proof}Apply the same technique as in Lemma 21 of \cite{RK00} to the setting of isoradial graphs.
\end{proof}

Recall that $f_0(w,b,i\xi)=\lim_{\delta\rightarrow 0}\frac{1}{\delta}\overline{\partial}_{\Omega,\delta}^{-1}(w,b)$, if $b$ is a vertex of $\mathcal{G}_{\Omega,\delta}$, and $\xi$ is the direction of the dual edge of $\mathcal{G}'_{\Omega,\delta}$ at $w$. Also recall that $f_1(w,b,\xi)=\lim_{\delta\rightarrow 0}\frac{1}{\delta}\overline{\partial}_{\Omega,\delta}^{-1}(w,b)$, if $b$ is a vertex of $\mathcal{G}'_{\Omega,\delta}$, and $\xi$ is the direction  of the dual edge of $\mathcal{G}'_{\Omega,\delta}$ at $w$. Note that the convergence is uniform on any compact subset of $\Omega\times\Omega\setminus\mathcal{D}$.

Plugging (\ref{ajp}) to (\ref{htt}), and expanding the determinant out, we obtain an expression of $\mathbb{E}h_{1,\delta}\cdot...\cdot h_{k,\delta}$ in terms of entries of $\overline{\partial}$ and $\overline{\partial}^{-1}$.  A typical term in this expression is
\begin{eqnarray}
(-1)^{\xi(j_1)+...+\xi(j_k)}\prod_{i=1}^{k}\overline{\partial}_{\Omega,\delta}(w_i,b_i)\mathrm{sgn}(\sigma)\overline{\partial}_{\Omega,\delta}^{-1}(w_1,b_{\sigma(1)})\overline{\partial}_{\Omega,\delta}^{-1}(w_2,b_{\sigma(2)})\cdot...\cdot\overline{\partial}_{\Omega,\delta}^{-1}(w_{k},b_{\sigma(k)}).\label{to}
\end{eqnarray}
Here $\sigma$ is a permutation of $k$ elements with no fixed point. Let us first assume that $\sigma$ is a $k$-cycle, reorder the indices so that (\ref{to}) becomes
\begin{eqnarray}
&&(-1)^{\xi(j_1)+...+\xi(j_k)}\prod_{i=1}^{k}\overline{\partial}_{\Omega,\delta}(w_i,b_i)\mathrm{sgn}(\sigma)\overline{\partial}_{\Omega,\delta}^{-1}(w_1,b_2)\overline{\partial}_{\Omega,\delta}^{-1}(w_2,b_3)\cdot...\cdot\overline{\partial}_{\Omega,\delta}^{-1}(w_{k},b_1)\label{otm}\\
&=&(-1)^{\xi(j_1)+...+\xi(j_k)}\mathrm{sgn}(\sigma)\delta^{k}\prod_{i=1}^{k}\overline{\partial}_{\Omega,\delta}(w_i,b_i)f_{\eta(j_2)}(w_1,b_2,\xi_1^{\eta(j_2)})\cdot f_{\eta(j_3)}(w_2,b_3,\xi_2^{\eta(j_3)})\cdot\notag\\
&&...\cdot f_{\eta(j_k)}(w_{k-1},b_k,\xi_{k-1}^{\eta(j_k)})\cdot f_{\eta(j_1)}(w_k,b_1,\xi_k^{\eta(j_1)})+o(1),\label{lmo}
\end{eqnarray}
where $\eta$ is a function on $\{1,2,3,4\}$, such that $\eta(1)=\eta(3)=0$, and $\eta(2)=\eta(4)=1$. $\xi_i^{0}$ is the direction of the primal edge at $w_i$, while $\xi_i^{1}$ is the direction of the dual edge at $w_i$. Recall that $j_i$ ($1\leq i\leq k$) is an integer indicating the type of the edge $w_ib_i$, as discussed before. In particular, $j_i=1,3$, means that $b_i$ is a vertex of $\mathcal{G}_{\Omega,\delta}$, and $j_i=2,4$, means that $b_i$ is a vertex of $\mathcal{G}'_{\Omega,\delta}$.

Lemma \ref{dz} below is contained in \cite{BdT07}, we present the proof here so that the paper is self-contained.

\begin{lemma}\label{dz}For $1\leq i\leq k$, when traveling along $\gamma_i$ from $f_{i,\delta}$ to $g_{i,\delta}$, let $\Delta z_i$ be the increment along the dual edge of $w_ib_i$, where the dual edge is in $\gamma_i$, and $w_ib_i$ is the unique primal edge crossing the dual edge. We have
\begin{eqnarray*}
(-1)^{\xi(j_1)+...+\xi(j_k)}\prod_{i=1}^{k}\overline{\partial}_{\Omega,\delta}(w_i,b_i)\delta^k=(-i)^{k}\Delta z_1\cdot...\cdot \Delta z_k
\end{eqnarray*}
\end{lemma}
\begin{proof}  According to the local geometry of the isoradial graph, we  know that $\Delta z_i$ has length equal to $2\delta \sin \theta_i$, and direction perpendicular to $w_ib_i$, here $\theta_i$ is the rhombus half-angle corresponding to $w_ib_i$

By definition of the $\overline{\partial}$ operator, for $1\leq i\leq k$, $\overline{\partial}_{\Omega,\delta}(w_i,b_i)$ is the complex number of length $2\sin\theta_i$,  with direction pointing from $w_i$ to $b_i$. Also recall that $w_ib_i$ is an edge of $\mathcal{G}_{\Omega,\delta}^D$ intersecting $\gamma_i$. If $w_i$ is on the left of the path $\gamma_i$ when travelling along $\gamma_i$ from $f_{i,\delta}$ to $g_{i,\delta}$, then
\begin{eqnarray*}
i\delta\overline{\partial}_{\Omega,\delta}(w_i,b_i)=\Delta z_i,
\end{eqnarray*}
however, if $w_i$ is on the right of the path $\gamma_i$ oriented from $f_{i,\delta}$ to $g_{i,\delta}$, then
\begin{eqnarray*}
(-i)\delta\overline{\partial}_{\Omega,\delta}(w_i,b_i)=\Delta z_i,
\end{eqnarray*}
hence the lemma is proved.
\end{proof}

Applying Lemma \ref{dz} to (\ref{lmo}), we obtain another expression of (\ref{otm}), namely
\begin{eqnarray}
(\ref{otm})=\mathrm{sgn}(\sigma)\prod_{i=1}^{k}\left[\Delta z_i f_{\eta(j_{i+1})}\left(w_i,b_{i+1},\xi_i^{\eta(j_{i+1})}\right)\right]\label{aep}
\end{eqnarray}

\begin{lemma}\label{lma}For $1\leq i\leq k$
\begin{eqnarray}
&&\Delta z_i f_{\eta(j_{i+1})}\left(w_i,b_{i+1},\xi_i^{\eta(j_{i+1})}\right)\label{ss}\label{ll1}\\
&=&\frac{1}{4\pi}\left[\left(\frac{\phi'(w_i)}{\phi(w_i)-\phi(b_{i+1})}+(-1)^{\eta(j_{i+1})}\frac{\phi'(w_i)}{\phi(w_i)-\overline{\phi(b_{i+1})}}\right)\Delta z_i\right.\\
&&\left.+(-1)^{\eta(j_i)+1}\overline{\Delta z_i}\left(\frac{\overline{\phi'(w_i)}}{\overline{\phi(w_i)}-\phi(b_{i+1})}+(-1)^{\eta(j_{i+1})}\frac{\overline{\phi'(w_i)}}{\overline{\phi(w_i)}-\overline{\phi(b_{i+1})}}\right)\right]\label{ll3}
\end{eqnarray}
\end{lemma}

\begin{proof}We plug in formulas (\ref{lm1}), (\ref{lm2}) to (\ref{ss}), and note that $\eta_{j_{i+1}}=0$ (resp. $\eta_{j_{i+1}}=1$), if $b_{i+1}$ is a vertex of $\mathcal{G}_{\Omega,\delta}$  (resp. $\mathcal{G}'_{\Omega,\delta}$). 

Moreover, recall that $w_ib_i$ is an edge of $\mathcal{G}_{\Omega,\delta}^D$ crossed by $\gamma_i$. An edge $\Delta z_i$ along $\gamma_i$ is always perpendicular to the crossing edge $w_ib_i$ in $\mathcal{G}_{\Omega,\delta}^D$.  If $\eta(j_i)=0$, $b_i$ is a vertex of $\mathcal{G}_{\Omega,\delta}$, $w_ib_i$ is parallel to $i\xi$, and therefore $\Delta z_i$ is parallels to $\xi$, hence we have
\begin{eqnarray*}
\frac{\Delta z_i}{\xi^2}=\overline{\Delta z_i},\qquad \mathrm{if}\ \eta(j_i)=0.
\end{eqnarray*}

Similarly, if $\eta(j_i)=1$, $b_i$ is a vertex of $\mathcal{G}'_{\Omega,\delta}$, $w_ib_i$ is parallel to $\xi$, and therefore $\Delta z_i$ is parallels to $i\xi$, hence we have
\begin{eqnarray*}
\frac{\Delta z_i}{(i\xi)^2}=\overline{\Delta z_i},\qquad \mathrm{if}\ \eta(j_i)=0.
\end{eqnarray*}
Then the lemma follows.

\end{proof}

We plug in (\ref{ll1})-(\ref{ll3}) to (\ref{aep}), to obtain an expression of (\ref{otm}), then plug in the expression of (\ref{otm}) to (\ref{ajp}) to obtain an expression of (\ref{jd}); then plug in the expression of (\ref{jd}) to (\ref{htt}) to compute the height moment (\ref{mh}).

For $1\leq s\leq k$, each factor $f_{\eta(j_{s+1})}(w_{s},b_{s+1},\xi_s^{\eta(j_{s+1})})$ in (\ref{lmo}) is the sum of four terms. We expand them out and express (\ref{lmo}) as the sum of $4^k$ terms.  

 Let $S={1,2,...,k}$. Each one of the $4^k$ terms in the expansion of (\ref{lmo}) gives a division of $S$ into 4 disjoint subsets $S_1,S_2,S_3,$ and $S_4$, such that 
\begin{enumerate} 
\item $S_1\cup S_2\cup S_3\cup S_4=S$, 
\item $S_i\cap S_j=\emptyset$, if $i,j\in\{1,2,3,4\}$ and $i\neq j$.
 \end{enumerate}
 
 Namely, if  $\phi(w_{i,t_i})$ and $\phi(b_{i,t_i})$ appear in the term, then $i\in S_1$; if $\phi(w_{i,t_i})$ and $\overline{\phi(b_{i,t_i})}$ appear in the term, then $i\in S_2$; if $\overline{\phi(w_{i,t_i})}$ and $\phi(b_{i,t_i})$ appear in the term, then $i\in S_3$; if $\overline{\phi(w_{i,t_i})}$ and $\overline{\phi(b_{i,t_i})}$ appear in the term, then $i\in S_4$. 

We claim that if in the division of $S$ determined by a term in the expansion of (\ref{lmo}), $S_2\cup S_3\neq \emptyset$, then the term, summing over $t_1,...,t_k$ will go to $0$ as $\delta\rightarrow 0$. In fact, applying Lemmas \ref{dz} and \ref{lma} , a term in the expansion of (\ref{lmo}) can be written as
\begin{eqnarray*}
&&(-i)^{k}\mathrm{sgn}(\sigma)u_{\sigma,S_1,S_2,S_3,S_4}(z_{1,t_1},...,z_{k,t_k},\overline{z}_{1,t_1},...,\overline{z}_{k,t_k})\prod_{i\in S_1}\left[\Delta  z_{i,t_i}\right]\prod_{i\in S_4}\overline{\left[-\Delta z_{i,t_i}\right]}\\
&&\prod_{i\in S_2}\left[(-1)^{\eta(j_i)}\Delta z_{i,t_i} \right]\prod_{i\in S_3}\left[(-1)^{\eta(j_i)+1}\overline{\Delta z_{i,t_i}}\right],
\end{eqnarray*}
where $z_{i,t_i}=\frac{w_{i,t_i}+b_{i,t_i}}{2}$, and $u_{\sigma,S_1,S_2,S_3,S_4}$ is a smooth function when $z_1,...,z_k$ are distinct. Note also that the function $u_{\sigma,S_1,S_2,S_3,S_4}$ depends on the division $S_1,S_2,S_3,S_4$, i.e.  different monomials in the expansion of (\ref{lmo}) give different function. It also depends on the specific permutation in the expansion of the determinant in (\ref{ajp}).

Here $\Delta z_{i,t_i}$ is the $t_i$-th oriented dual edge connecting adjacent faces along the path $\gamma_i$, where edges along $\gamma_i$ are oriented in such a way that $\gamma_i$ is oriented from $f_{i,\delta}$ to $g_{i,\delta}$.

\begin{lemma}
\begin{eqnarray}
\sum_{t_i}(-1)^{\eta(j_i)}\Delta z_{i,t_i}=O(\delta)\label{vnh}
\end{eqnarray}
\end{lemma}
\begin{proof}Let $F_0(:=f_{i,\delta})$, $F_1$,..., $F_{T_i}(:=g_{i,\delta})$ be all the faces along the dual path $\gamma_i$, then
\begin{eqnarray*}
\Delta z_{i,t_i}=F_{t_i}-F_{t_i-1}, \qquad\qquad\mathrm{for}\ 1\leq t_i\leq T_i,
\end{eqnarray*}
where $F_{t_i}-F_{t_i-1}$ is a dual edge, and it is crossed by an unique primal edge $w_{i,t_i}b_{i,t_i}$. On the left hand side of (\ref{vnh}), the sign of $\Delta z_{i,t_i}$ depends on the type of $b_{i,t_i}$; namely, if $b_{i,t_i}$ is a vertex of $\mathcal{G}_{\Omega,\delta}$ (resp. $\mathcal{G}'_{\Omega,\delta}$), then the sign of $\Delta z_{i,t_i}$ in the sum (\ref{vnh}) is positive (resp. negative).

Note that each face in $\mathcal{G}_{\Omega,\delta}^D$ has 4 vertices, exactly two vertices are black, and 2 vertices are white. Moreover, in each pair of black vertices of the same face in $\mathcal{G}_{\Omega,\delta}^D$, exactly one black vertex is a vertex of $\mathcal{G}_{\Omega,\delta}$, and the other is a vertex of $\mathcal{G}'_{\Omega,\delta}$.

Moving along dual edges of $\gamma_i$, at the beginning we meet with dual edges whose crossing primal edges share a black vertex $b_1$ until the $s_1$-th dual edge, next we meet with the $(s_1+1)$-th dual edge whose crossing primal edge has a different black vertex $b_2$. In fact, $b_2$ and $b_1$ must share a face, and exactly one of them is a vertex of $\mathcal{G}_{\Omega,\delta}$, and the other is a vertex of $\mathcal{G}'_{\Omega,\delta}$.

Then we keep moving along $\gamma_i$, assume all the dual edges from the $(s_1+1)$-th dual edge to the $s_2$th dual edge, have crossing primal edges sharing a black vertex $b_2$, but the $(s_2+1)$-th dual edge has a crossing primal edge with a different black vertex $b_3$. Evidently $b_2$ and $b_3$ share a face in $\mathcal{G}_{\Omega,\delta}^D$, and they are black vertices of different types. 

We continue this process and obtain
\begin{eqnarray*}
\sum_{t_i}(-1)^{\eta(j_i)}\Delta z_{i,t_i}=\sum_{p=1}^{q}(-1)^{p+1}[F_{s_p}-F_{s_{p-1}}]
\end{eqnarray*}
where for $p=1,...,q-1$, $F_{s_p}$'s are all the faces separating two dual edges along $\gamma_i$ whose crossing primal edges have different black vertices. 

For $p=1,...,q$, let $b_p$ be the common black vertex shared by crossing primal edges of all the dual edges along $\gamma_i$ between $F_{s_{p-1}}$ and $F_{s_p}$, we have
\begin{eqnarray*}
\sum_{t_i}(-1)^{\eta(j_i)}\Delta z_{i,t_i}=\sum_{p=1}^{q}(-1)^{p+1}[(F_{s_p}-b_p)+(b_p-F_{s_{p-1}})].
\end{eqnarray*}
Moreover, the geometry of the isoradial graph gives us
\begin{eqnarray*}
2F_{s_{p}}=b_p+b_{p+1},\qquad\qquad\mathrm{for}\ 1\leq p\leq q-1.
\end{eqnarray*}
see Figure \ref{fc}.

\begin{figure}[htb]
\begin{center}
\includegraphics{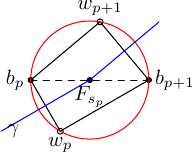}
\caption{face of isoradial graph and dual path}\label{fc}
\end{center}
\end{figure}

As a result,
\begin{eqnarray*}
\sum_{t_i}(-1)^{\eta(j_i)}\Delta z_{i,t_i}=(-1)^{q+1}(F_{s_q}-b_q)-(F_0-b_1)=O(\delta).
\end{eqnarray*}
\end{proof}

Therefore if a term in the expansion of (\ref{lmo}) has a division of $S$ in which $S_2\cup S_3\neq \emptyset$, then the sum over $t_1,...,t_k$ of the term will go to zero as $\delta\rightarrow 0$, since it is the sum of a smooth function multiplying an oscillating term. Taking into account the boundary behavior of $\overline{\partial}^{-1}_{\Omega,\delta}$ near $L_0$, as discussed in  Sect.~\ref{nf}, we have
\begin{eqnarray}\label{mf}
\lim_{\delta\rightarrow 0}\mathbb{E}h_{1,\delta}\cdot...\cdot h_{k,\delta}=\sum_{\epsilon_1,...,\epsilon_k\in\{-1,1\}}\left(\frac{-i}{4\pi}\right)^{k}\prod_{i=1}^{k}\epsilon_{i}\int_{\phi(\gamma_1),...,\phi(\gamma_k)}\det U([\phi(z_1)]^{(\epsilon_1)},...,[\phi(z_k)]^{(\epsilon_k)}) dz_1^{(\epsilon_1)}\cdot...\cdot d z_k^{(\epsilon_k)}
\end{eqnarray}
where $U(x_1,...,x_k)$ is a $k\times k$ matrix defined by $U_{i,i}=0$, and $U_{i,j}=\frac{1}{x_i-x_j}$; if $w\in \mathbb{C}$ is a complex number, we use the notation $w^{(1)}:=w$, and $w^{(-1)}:=\overline{w}$.

Lemma \ref{ctl} below is a classical formula to compute the Cauchy determinant.

\begin{lemma}(\cite{Kr})\label{ctl}Let $M=(m_{ij})$ be the $k\times k$ matrix defined by $m_{ii}=0$, and $m_{ij}=\frac{1}{x_i-x_j}$. When $k$ is odd, $\det M=0$, and when $k$ is even, 
\begin{eqnarray*}
\det M=\sum\frac{1}{(x_{\sigma(1)}-x_{\sigma(2)})^2(x_{\sigma(3)}-x_{\sigma(4)})^2\cdots(x_{\sigma(k-1)}-x_{\sigma(k)})^2},
\end{eqnarray*}
where the sum is over all $(k-1)!!$ possible pairings $\{\{\sigma(1),\sigma(2)\},...,\{\sigma(k-1),\sigma(k)\}\}$ of $\{1,...,k\}$.
\end{lemma}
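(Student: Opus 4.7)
The plan is to exploit the antisymmetry $M_{ij} = -M_{ji}$ and reduce the claim to a Pfaffian identity. The odd-$k$ case is immediate: $\det M = \det M^{T} = \det(-M) = (-1)^{k}\det M$, so $\det M = 0$ whenever $k$ is odd.

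For even $k$, I would invoke the standard identity $\det M = \mathrm{Pf}(M)^{2}$ for antisymmetric matrices, where
$$\mathrm{Pf}(M) = \sum_{\pi} \mathrm{sgn}(\pi) \prod_{\{i,j\} \in \pi,\, i<j} \frac{1}{x_{i}-x_{j}},$$
with the sum running over perfect matchings $\pi$ of $\{1,\dots,k\}$. Squaring produces a double sum over pairs $(\pi,\pi')$ of matchings. The diagonal pairs $\pi = \pi'$ contribute exactly $\sum_{\pi}\prod_{\{i,j\}\in\pi}\frac{1}{(x_{i}-x_{j})^{2}}$, which is precisely the $(k-1)!!$-term formula in the lemma. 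So the task reduces to showing that the off-diagonal contributions $\pi \neq \pi'$ cancel.

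For the cross terms, I would group pairs $(\pi,\pi')$ by the cycle structure of the multigraph $\pi \cup \pi'$. Since both are perfect matchings, every vertex of this multigraph has degree two, so it decomposes into cycles of even length: $2$-cycles from edges in $\pi \cap \pi'$ (which only contribute to the diagonal part) and longer alternating cycles coming from the symmetric difference. On a fixed vertex set $\{y_{1},\dots,y_{2m}\}$ forming such a $2m$-cycle with $m \geq 2$, the admissible pairs are indexed by cyclic orderings of the vertices, and each one contributes (up to a uniform sign coming from $\mathrm{sgn}(\pi)\mathrm{sgn}(\pi')$ being constant on the cycle, since every cyclic permutation of length $2m$ has sign $(-1)^{2m-1}$) a product $\prod_{i=1}^{2m}\frac{1}{y_{\tau(i)}-y_{\tau(i+1)}}$. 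Thus the contribution supported on any such cycle collapses to the identity
$$\sum_{\tau \text{ cyclic on } \{1,\dots,2m\}} \prod_{i=1}^{2m} \frac{1}{y_{\tau(i)}-y_{\tau(i+1)}} = 0, \qquad m \geq 2.$$

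The main obstacle is establishing this cyclic-sum identity. I would prove it by induction on $m$, collapsing one variable at a time via the three-term partial-fraction identity
$$\frac{1}{(a-b)(a-c)} + \frac{1}{(b-a)(b-c)} + \frac{1}{(c-a)(c-b)} = 0;$$
alternatively, the cyclic sum can be recognised as the total residue at $y_{1},\dots,y_{2m}$ of a rational function in an auxiliary variable $z$ that decays like $O(|z|^{-2})$ at infinity, forcing the sum of residues to vanish. Once this cancellation is in place, only the diagonal pairs $\pi = \pi'$ survive in $\mathrm{Pf}(M)^{2}$, and they produce exactly the stated formula.
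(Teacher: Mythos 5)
The paper itself offers no proof of this lemma; it is imported verbatim from \cite{RK01} and used as a black box, so there is no internal argument to compare yours against. Your Pfaffian route is a legitimate self-contained proof, and you correctly isolate where the content lies: the odd case is the one-line antisymmetry computation, the diagonal terms of $\mathrm{Pf}(M)^2$ reproduce exactly the claimed pairing sum, and everything reduces to the vanishing of the cyclic sum $\sum_{\tau}\prod_{i}(y_{\tau(i)}-y_{\tau(i+1)})^{-1}$ over cyclic orderings of $2m\geq 4$ points. That identity is true, and both of your suggested proofs work; in fact the partial-fraction route closes in one step rather than by induction: writing $\frac{v_j-v_{j+1}}{(v_j-y)(y-v_{j+1})}=\frac{1}{y-v_{j+1}}-\frac{1}{y-v_j}$ and summing over the positions at which a fixed vertex $y$ may be inserted into a fixed cyclic ordering of the remaining vertices, the sum telescopes to zero around the cycle.

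The one step you should not leave as a parenthetical is the sign bookkeeping for the cross terms. The justification you offer (``every cyclic permutation of length $2m$ has sign $(-1)^{2m-1}$'') is not the statement you need: what must be verified is that $\mathrm{sgn}(\pi)\mathrm{sgn}(\pi')$, combined with the orientation signs incurred in converting the Pfaffian products $\prod_{i<j}m_{ij}$ into the directed product around the alternating cycle, is \emph{constant} over all pairs $(\pi,\pi')$ whose symmetric difference is supported on a given vertex set; only then does the cross-term sum collapse onto the cyclic identity. This is true but needs an argument. You can sidestep it entirely by expanding $\det M=\sum_\sigma\mathrm{sgn}(\sigma)\prod_i m_{i\sigma(i)}$ and decomposing $\sigma$ into cycles: fixed points contribute $0$ since $m_{ii}=0$; a permutation containing an odd cycle of length at least $3$ cancels against the one with that cycle reversed, since reversal preserves $\mathrm{sgn}$ but multiplies the product by $(-1)^{\mathrm{odd}}=-1$; permutations containing an even cycle of length at least $4$ cancel by your cyclic identity, whose sign prefactor depends only on the cycle length and so factors out; and the survivors are the fixed-point-free involutions, each contributing $(-1)^{k/2}\prod m_{ij}m_{ji}=\prod(x_i-x_j)^{-2}$. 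This is the same cancellation you found, with the Pfaffian sign conventions removed.
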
 

Now we apply the moment formula (\ref{mf}) and compute the second moment explicitly. We have
\begin{eqnarray*}
&&\lim_{\delta\rightarrow 0}\mathbb{E}h_{1,\delta}(z_1)h_{2,\delta}(z_2)\\
&=&-\frac{1}{16\pi^2}\left[\int_{\phi(\gamma_1),\phi(\gamma_2)}\frac{1}{(\zeta_1-\zeta_2)^2}d\zeta_1d\zeta_2-\int_{\phi(\gamma_1),\phi(\gamma_2)}\frac{1}{(\zeta_1-\overline{\zeta}_2)^2}d\zeta_1d\overline{\zeta}_2\right.\\
&&\left.-\int_{\phi(\gamma_1),\phi(\gamma_2)}\frac{1}{(\overline{\zeta}_1-\zeta_2)^2}d\overline{\zeta}_1d\zeta_2+\int_{\phi(\gamma_1),\phi(\gamma_2)}\frac{1}{(\overline{\zeta}_1-\overline{\zeta}_2)^2}d\overline{\zeta}_1d\overline{\zeta}_2\right]\\
&=&-\frac{1}{8\pi^2}\log\left|\frac{\phi(z_2)-\phi(z_1)}{\phi(z_2)-\overline{\phi}(z_1)}\right|=-\frac{1}{4\pi}g_{\mathbb{H}}(\phi(z_1),\phi(z_2)),
\end{eqnarray*}
where $g_{\mathbb{H}}$ is the Green's function for the upper half plane $\mathbb{H}$. 

Assume $z_1,...,z_k$ are distinct points in $\Omega$. Applying Lemma \ref{ctl}, we have, when $k$ is odd,
\begin{eqnarray*}
\lim_{\delta\rightarrow 0}\mathbb{E}[h_{1,\delta}(z_1)\cdot...\cdot h_{k,\delta}(z_k)]=0.
\end{eqnarray*}
When $k$ is even,
\begin{eqnarray*}
\lim_{\delta\rightarrow 0}\mathbb{E}[h_{1,\delta}(z_1)\cdot...\cdot h_{k,\delta}(z_k)]=\sum_{pairings\  \sigma}\left(-\frac{1}{4\pi}\right)^{\frac{k}{2}}\prod_{j=1}^{\frac{k}{2}}g_{\mathbb{H}}(\phi(z_{\sigma(2j-1)}),\phi(z_{\sigma(2j)})).
\end{eqnarray*}

The next lemma gives a control on the expectation of product of height functions at finitely many interior  points of the finite domain, of which  two or more points are identical. For a similar bound for height moments of dimer configurations on a whole-plane isoradial graph, see Lemma 20 of \cite{BdT07}.

\begin{lemma}\label{sg}
If two or more $z_i$'s are equal, we have
\begin{eqnarray*}
\mathbb{E}[h_{1,\delta}(z_1)\cdot...\cdot h_{k,\delta}(z_k)]=O([\log\delta]^{\ell})
\end{eqnarray*}
where $\ell$ is the number of coincides, i.e. $k-\ell$ is the number of distinct $z_i$'s.
\end{lemma}

\begin{proof}If two or more $z_i$'s are equal, we choose the paths $\gamma_{i,\delta}$'s, consisting of dual edges of $\mathcal{G}_{\Omega,\delta}^D$, so that the $\gamma_{i,\delta}'s$ for different $i$'s are close to each other only at coinciding $z_i$'s, but far away from each other elsewhere. Previous computations show that
\begin{eqnarray}
&&\mathbb{E}h_{1,\delta}\cdot...\cdot h_{k,\delta}\\
&=&\sum_{j_1,...,j_k\in\{1,2,3,4\}}\sum_{\sigma\in \hat{S}_k}\sum_{t_1,...,t_k}\left(\frac{-i}{4\pi}\right)^k\mathrm{sgn}(\sigma)\\
&&\prod_{i=1}^{k}\left[\mathrm{Error\ terms}+\left(\frac{\phi'(w_{i,t_i,j_i})}{\phi(w_{i,t_i,j_i})-\phi(b_{\sigma(i),t_{\sigma(i)},j_{\sigma(i)}})}+(-1)^{\eta(j_{\sigma(i)})}\frac{\phi'(w_{i,t_i,j_i})}{\phi(w_{i,t_i,j_i})-\overline{\phi(b_{\sigma(i),t_{\sigma(i)},j_{\sigma(i)}})}}\right)\Delta z_{i,t_i,j_i}\right.\\
&&+\left.(-1)^{\eta(j_{\sigma(i)})+1}\overline{\Delta z_{i,t_i,j_i}}\left(\frac{\overline{\phi'(w_i,t_i,j_i)}}{\overline{\phi_{w_i,t_i,j_i}}-\phi(b_{\sigma(i),t_{\sigma(i)},j_{\sigma(i)}})}+(-1)^{\eta(j_{\sigma(i)})}\frac{\overline{\phi'(w_{i,t_i,j_i})}}{\overline{\phi(w_{i,t_i,j_i})}-\overline{\phi(b_{\sigma(i),t_{\sigma(i)},j_{\sigma(i)}})}}\right).
\right]\label{esm}
\end{eqnarray}
Here $\hat{S}_k$ is the set of all permutations with $k$ elements with no fixed point.

We need to estimate the error terms in (\ref{esm}). When $b_{\sigma(i),t_{\sigma(i)},j_{\sigma(i)}}$ is a vertex of $\mathcal{G}'_{\Omega,\delta}$, from the proof of Lemma \ref{db}, we know that
\begin{eqnarray*}
|\mathrm{Error\ terms}|\leq O\left(\frac{\delta^2}{|w_{i,t_i,j_i}-b_{\sigma(i),t_{\sigma(i)},j_{\sigma(i)}}|}\right)+o(\delta)
\end{eqnarray*}
When $b_{\sigma(i),t_{\sigma(i)},j_{\sigma(i)}}$ is a vertex of $\mathcal{G}_{\Omega,\delta}$, from the proof of Lemma \ref{nb}, we know that
\begin{eqnarray*}
|\mathrm{Error\ terms}|\leq O\left(\frac{\delta}{|w_{i,t_i,j_i}-b_{\sigma(i),t_{\sigma(i)},j_{\sigma(i)}}|}\right)+o(\delta)
\end{eqnarray*}

 If $\ell$ is the number of coincides of $z_i$'s, we have
 \begin{eqnarray*}
&&\left| \mathbb{E}h_{1,\delta}\cdot...\cdot h_{k,\delta}-\sum_{j_1,...,j_k\in\{1,2,3,4\}}
\sum_{\sigma\in \hat{S}_k}\sum_{t_1,...,t_k}\left(\frac{-i}{4\pi}\right)^k\mathrm{sgn}(\sigma)\right.\\
&&\prod_{i=1}^{k}\left[\left(\frac{\phi'(w_{i,t_i,j_i})}{\phi(w_{i,t_i,j_i})-\phi(b_{\sigma(i),t_{\sigma(i)},j_{\sigma(i)}})}+(-1)^{\eta(j_{\sigma(i)})}\frac{\phi'(w_{i,t_i,j_i})}{\phi(w_{i,t_i,j_i})-\overline{\phi(b_{\sigma(i),t_{\sigma(i)},j_{\sigma(i)}})}}\right)\Delta z_{i,t_i,j_i}\right.\\
&&+\left.\left.(-1)^{\eta(j_{\sigma(i)})+1}\overline{\Delta z_{i,t_i,j_i}}\left(\frac{\overline{\phi'(w_i,t_i,j_i)}}{\overline{\phi_{w_i,t_i,j_i}}-\phi(b_{\sigma(i),t_{\sigma(i)},j_{\sigma(i)}})}+(-1)^{\eta(j_{\sigma(i)})}\frac{\overline{\phi'(w_{i,t_i,j_i})}}{\overline{\phi(w_{i,t_i,j_i})}-\overline{\phi(b_{\sigma(i),t_{\sigma(i)},j_{\sigma(i)}})}}\right)
\right]\right|\\
&&\leq O[(\log\delta)^{\ell}],
 \end{eqnarray*}
 then the lemma follows.
\end{proof}

We have the following theorem:
 
 \begin{theorem}The scaling limit of distribution of (unnormalized) height is conformally invariant and universal, and is the same as $\frac{1}{2\sqrt{\pi}}$ times a Gaussian free field.
 \end{theorem}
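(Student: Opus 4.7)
The plan is to invoke the method of moments. All the hard analytic work, namely the convergence of $\overline{\partial}^{-1}_{\Omega,\delta}$ to its conformally-invariant limit (Lemmas \ref{db} and \ref{nb}), and the identification in (\ref{lm3}) of the limit with an expression involving the conformal map $\phi:\Omega\to\mathbb{H}$, has been done. What remains is to match the joint moments of the height function with the Wick moments of a Gaussian free field and to pass this moment information to convergence in distribution.

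First, I would expand the $K$-th moment $\mathbb{E}[h_{1,\delta}\cdots h_{K,\delta}]$ using the moment formula (\ref{mf}) and Lemma \ref{ctl}. The determinant $\det U(z_1^{(\epsilon_1)},\dots,z_K^{(\epsilon_K)})$ vanishes for odd $K$ and, for even $K$, splits into a sum over pairings of $\prod_j (z_{\sigma(2j-1)}^{(\epsilon_{\sigma(2j-1)})}-z_{\sigma(2j)}^{(\epsilon_{\sigma(2j)})})^{-2}$. For each fixed pairing, the sum over $(\epsilon_i)\in\{\pm 1\}^K$ factorizes over pairs; for a single pair the four choices of $(\epsilon,\epsilon')$ give a contour integral along $\phi(\gamma_i)\times\phi(\gamma_j)$ whose antiderivative is $\frac{1}{x-y}$, and the explicit calculation already carried out for the second moment shows that this pairwise integral equals $-\tfrac{1}{\pi}G_\mathbb{H}(\phi(z_i),\phi(z_j))$. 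The boundary endpoints $g_{j,\delta}\to w_j\in L_0$ contribute nothing because $\phi(L_0)\subset\mathbb{R}$ and $G_\mathbb{H}$ vanishes on $\mathbb{R}$. Hence, for $K$ even,
\begin{equation*}
\lim_{\delta\to 0}\mathbb{E}[h_{1,\delta}\cdots h_{K,\delta}]=\sum_{\text{pairings }\sigma}\Bigl(-\tfrac{1}{\pi}\Bigr)^{K/2}\prod_{j=1}^{K/2}G_\mathbb{H}\bigl(\phi(z_{\sigma(2j-1)}),\phi(z_{\sigma(2j)})\bigr),
\end{equation*}
and the limit vanishes for $K$ odd.

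Next I would compare these with the Wick moments of $\tfrac{1}{\sqrt{\pi}}\mathcal{F}$, where $\mathcal{F}$ is the GFF on $\Omega$ with Dirichlet boundary. By Wick's theorem for Gaussian fields, the $K$-point function of $\tfrac{1}{\sqrt{\pi}}\mathcal{F}$ is exactly the above expression, using the conformal invariance $G_\Omega(z,w)=G_\mathbb{H}(\phi(z),\phi(w))$ of the Green's function. So the joint moments of $(h_{1,\delta},\dots,h_{K,\delta})$ converge to those of $(\tfrac{1}{\sqrt{\pi}}\mathcal{F}(z_1,w_1),\dots,\tfrac{1}{\sqrt{\pi}}\mathcal{F}(z_K,w_K))$ (in the sense of the paired height test); conformal invariance of the limit is automatic because the limit depends on $z_i$ only through $\phi(z_i)$.

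The last step, and the main technical subtlety, is turning moment convergence into convergence in distribution of the random field. Because the moments are Gaussian in character (growth $\leq C^K K!^{1/2}$ from the pairing sum), Carleman's condition is satisfied and the limiting law is uniquely determined by its moments, so for any fixed finite collection of paired heights the joint law converges weakly to the corresponding Gaussian. To promote this to a random distribution one pairs $h_\delta$ with smooth test functions $\varphi\in C_0^\infty(\Omega)$, writing $\langle h_\delta,\varphi\rangle$ as a Riemann sum of height differences; the bound $\mathbb{E}[h_{1,\delta}\cdots h_{k,\delta}]=O(\delta^{-\ell})$ at coincident arguments is integrable against $\varphi$ in finite dimensions, so the moments of $\langle h_\delta,\varphi\rangle$ converge to those of $\tfrac{1}{\sqrt{\pi}}\mathcal{F}(\varphi)$. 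The hard part really is only this last step, controlling the short-distance blow-up uniformly in $\delta$ when integrating the moment formula against smooth test functions; once done, universality is immediate since the limiting expression involves only the conformal map $\phi$ and not the microscopic data of $\mathcal{G}_\delta$.
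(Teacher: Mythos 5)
Your proposal follows essentially the same route as the paper: the $K$-th moment is expanded via the determinant/pairing formula of Lemma \ref{ctl}, identified with the Wick moments of $\frac{1}{\sqrt{\pi}}\mathcal{F}$ through the conformal invariance $g_{\Omega}=G_{\mathbb{H}}\circ\phi$, and then the height field is paired with test functions $\psi\in C_0^{\infty}(\Omega)$, with the diagonal contribution controlled by the $O(\delta^{-\ell})$ bound at coincident points. Your explicit appeal to Carleman's condition for moment determinacy is a small refinement the paper leaves implicit, but the argument is the same.
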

 \begin{proof}It suffices to prove that for any $\psi\in C_0^{\infty}(\Omega)$,
 \begin{eqnarray}
 \sum_{f\in\mathcal{G}_{\Omega,\delta}^D}\psi(f)h_{\delta}(f)A(f)\rightarrow \int_{\Omega}\mathcal{F}\psi dxdy,\label{wkc}
 \end{eqnarray}
 where the convergence is in distribution, $f$ is a face in $\mathcal{G}_{\Omega,\delta}^D$, and $\mathcal{F}$ is the GFF for the domain $\Omega$, $A(f)$ is the area of the face $f$. We have
 \begin{eqnarray*}
&& \mathbb{E}\left[\left(\sum_{f_1\in\mathcal{G}_{\Omega,\delta}^D}\psi(f_1)h_{\delta}(f_1)A(f_1)\right)\cdot...\cdot\left(\sum_{f_k\in\mathcal{G}_{\Omega,\delta}^D}\psi(f_k)h_{\delta}(f_k)A(f_k)\right)\right]\\
&=&\sum_{f_1,...,f_k\in\mathcal{G}_{\Omega,\delta}^D}\psi(f_1)A(f_1)...\psi(f_k)A(f_k)\mathbb{E}\left[h_{\delta}(f_1)\cdot...h_{\delta}(f_k)\right]\\
&\sim&\int_{\Omega}...\int_{\Omega}\mathbb{E}[\mathcal{F}(z_1)\cdot...\cdot\mathcal{F}(z_k)]\prod_{i=1}^{k}\psi_i(z_i)dxdy+O(\delta),
 \end{eqnarray*}
 That is, the moments of left side of (\ref{wkc}) converges to those of right side of (\ref{wkc}).
 
To verify the last identity,  we notice that if $\ell$ is the number of coinciding $z_i's$, by Lemma \ref{sg}, the convergence always holds. Given the error term $O(\delta)$, the characteristic function of the left hand side of (\ref{wkc}) converges to the characteristic function of the right hand side of (\ref{wkc}). Since the right hand side of (\ref{wkc}) is a Gaussian random variable, its characteristic function is always continuous at 0. Hence the weak convergence (\ref{wkc}) is true, and the theorem follows.
 \end{proof}
 
\noindent\textbf{Acknowlegements} The work was supported by the Engineering and Physical Sciences Research Council under grant EP/103372X/1.
 
 \bibliography{dimiso}
\bibliographystyle{amsplain}

\end{document}